\theoremstyle:=definition,remark,plain\do{%
        \expandafter\g@addto@macro\csname th@\theoremstyle\endcsname{%
            \addtolength\thm@preskip\parskip
            }%
        }
\newcommand{\foralls}{\forall \,}
\newcommand{\inner}[2]{\left(#1,#2\right)}
\newcommand{\uv}[1]{\underline{u}_{#1}}
\newcommand{\wv}[1]{\underline{w}_{#1}}
\newcommand{\Smo}[1]{\tau_{#1} L^{-1}_{#1}}
\newcommand{\hmax}{h}
\newcommand{\hmaxh}[1]{h_{#1}}
\newtheorem{theorem}{Theorem}[section]
\newtheorem{algorithms}{Algorithm}[section]
\newtheorem{corollary}{Corollary}[section]
\newtheorem{lemma}{Lemma}[section]
\newtheorem{remark}{Remark}[section]
\newtheorem{notation}{Notation}[section]
\numberwithin{equation}{section}
\title{\vspace{-15mm}\fontsize{18pt}{10pt}\selectfont\textbf{Multigrid solvers for isogeometric discretizations of the second biharmonic problem}}
\author{
\large
Jarle Sogn$^1$ and Stefan Takacs$^2$
\\[2mm] 
\normalsize $^1$ Johann Radon Institute for Computational and Applied Mathematics, \\
\normalsize Austrian Academy of Sciences, \\
\normalsize Altenberger Str. 69, 4040 Linz, Austria \\
\normalsize $^2$ Institute of Computational Mathematics, \\ 
\normalsize Johannes Kepler University Linz, \\
\normalsize Altenberger Str. 69, 4040 Linz, Austria 
\vspace{-5mm}
}
\date{}
\begin{document}

\maketitle

\begin{abstract}
  We develop a multigrid solver for the second biharmonic problem
  in the context of Isogeometric Analysis (IgA), where we also allow a zero-order
  term. In a previous paper, the authors
  have developed an analysis for the first biharmonic problem based on Hackbusch's
  framework. This analysis can only be extended to the second biharmonic
  problem if one assumes uniform grids.
  In this paper, we prove a multigrid convergence estimate using Bramble's
  framework for multigrid
  analysis without regularity assumptions. We show that the bound for the convergence
  rate is independent of the scaling of the zero-order term and the spline degree. It
  only depends linearly on the number of levels, thus
  logarithmically on the grid size.  Numerical experiments are provided which
  illustrate the convergence theory and the  efficiency of the proposed multigrid approaches.
\end{abstract}

\section{Introduction}
We consider multigrid methods for biharmonic problems discretized by Isogeometric Analysis (IgA). In particular, we consider the following model problem:
Given a bounded domain $\Omega\subset \mathbb R^d$, $d\in\{2,3\}$, with Lipschitz boundary $\partial\Omega$,
a parameter $\beta \geq 0$ and sufficiently smooth functions $f$, $g_1$, and $g_2$,
find a function $u$ such that
\begin{align}
  \label{eq:probStrong}
  \begin{split}
    \beta u + \Delta^2 u  &= f \quad \text{in} \quad \Omega,\\
    u  &= g_1 \quad \text{on} \quad \partial\Omega,\\
    \Delta u  &= g_2 \quad \text{on} \quad \partial\Omega
  \end{split}
\end{align}
holds in a variational sense.
For $\beta = 0$, this problem is known as the \textit{second biharmonic problem}, which is of interest for plate theory (cf. \cite{ciarlet2002finite}) and Stokes streamline equations (cf. \cite{girault2012finite}). Problems with $\beta > 0$ are of particular interest in the context
of optimal control problems, where the constraint is a second order elliptic operator. The
optimality systems associated to these optimal control problems can be preconditioned robustly using preconditioners that rely on solving~\eqref{eq:probStrong}, see \cite{MarNieNor17,SogZul18,beigl2019robust,mardal2020robust}. The problem~\eqref{eq:probStrong}
is obtained when considering the full observation; if one considers an optimal
control problem with limited observation, one would obtain a similar problem, where the mass term $\beta u$ is multiplied with the characteristic function for
the observation domain.

We derive a standard variational formulation of the model problem, which lives in the Sobolev
space $H^2(\Omega)$. For the discretization, we use Isogeometric Analysis (IgA) since
it easily allows for $H^2$-conforming discretizations. Particularly, we consider a
discretization based on tensor product B-splines of some degree $p>1$ and maximum smoothness, i.e.,
$p-1$ times continuously differentiable. For the derivation of the multigrid solver,
we set up a hierarchy of grids as obtained by uniform refinement.
Since we keep spline degree and spline smoothness fixed, we obtain nested spaces.

Concerning the choice of the smoother, there are many possibilities. We are interested in a smoother
that yields a $p$-robust multigrid method. The first $p$-robust multigrid solvers were based on the 
{boundary corrected mass smoother} \cite{hofreither2017robust} and
the {subspace corrected mass smoother} \cite{hofreither2016robust}. Both have been
formulated for the Poisson problem. Since the subspace corrected mass smoother is more flexible and
has proven itself more efficient in practice, we restrict ourselves to that smoother. The multigrid
solvers with subspace corrected mass smoother have been extended to the first biharmonic problem in \cite{sogn2019robust} and to the second and third biharmonic problem in the thesis \cite{sogn2018schur}. The convergence estimates are shown using the standard splitting of the analysis into approximation
property and smoothing property, as proposed by Hackbusch, cf.~\cite{hackbusch2013multi}.

The theory in all of these papers requires that the grids are uniform since they have been based on the
$p$-robust approximation error estimates from~\cite{takacs2016approximation}, which are valid only
in this case. Since then, newer $p$-robust approximation error estimates,
see \cite{sande2020explicit, sande2019sharp}, have been proposed, which do not require uniform
grids. Using these new estimates, it is straightforward to relax this assumption and to show analogous results for the Poisson problem as well as the first biharmonic problem for quasi uniform grids. However, this is not straightforward for the second biharmonic problem, since the proof requires a certain commutativity property
(cf.~\cite[Lemma~9.2]{sogn2018schur}), which is only valid in case of uniform grids.

In this paper, we go another way. We base the analysis on the framework introduced by
Bramble et al., cf.~\cite{bramble1991convergence,bramble2018multigrid}. This allows us to drop the requirement that the grids are
uniform. While this analysis could also be performed for other kinds of boundary conditions, like
the first biharmonic problem, we restrict ourselves to the second biharmonic problem since it
has previously turned out to be the more challenging one.
For this setting, we prove a multigrid convergence estimate which is robust with respect to
the spline degree $p$ and which only depends logarithmically on the grid size $h$.

Moreover, we show that the convergence is robust in the parameter $\beta\ge0$. This analysis is motivated by the
mentioned optimal control problem. Such parameter-robust multigrid solvers are also known for
the Poisson problem, see \cite{olshanskii2000convergence} for an analysis based on Hackbusch's
framework. There, the authors also provide a regularity result for the corresponding partial
differential equation (PDE), which is based on standard results for the Poisson problem. In
our case, we do not need to do that since Bramble's analysis is not based on any regularity
assumptions.

In the numerical experiments, one can observe that the convergence of a multigrid solver
with subspace corrected mass smoother degrades if the geometry gets distorted. While this is
also true for the Poisson problem, this dependence is significantly amplified for the
biharmonic problem. The reason for the geometry dependence of the convergence rates is that
the subspace corrected mass smoother is based on the tensor product structure of the spline space.
This tensor product structure is distorted by the geometry mapping. So, the contributions of
the geometry function are ignored when setting up the smoother. We aim to overcome this problem
by considering a hybrid smoother that combines the proposed smoother with Gauss-Seidel sweeps,
see also~\cite{sogn2019robust,sogn2018schur}.

Alternative smoothers based on overlapping multiplicative Schwarz techniques have been
considered in \cite{de2020robust,mardal2020robust}. Both approaches give good
numerical results for the biharmonic problem. However, there is no rigorous, $p$-robust convergence
theory available for these methods. It is worth mentioning that, as an 
alternative for solving biharmonic problems on the primal form, various kinds of mixed or
non-conforming formulations have been
developed, cf. \cite{doi:10.1137/0726062, Zhang:Xu, hanisch1993multigrid,
rafetseder2018decomposition, chen2015multigrid}.

The remainder of the paper is organized as follows. We introduce IgA, the biharmonic model problem
in its variational form and its discretization in Section~\ref{sec:prelims}. In Section~\ref{sec:MG}, the multigrid method is introduced and we state sufficient conditions for its convergence.  We develop the approximation error estimates needed for the convergence estimates in Section~\ref{Approx}. The choice of the smoother, the smoothing properties and the resulting multigrid convergence results are addressed in Section~\ref{sec:smoothers}. Finally, we provide numerical results in Section~\ref{sec:numerical}.

\section{Model problem and its discretization}
\label{sec:prelims}

\subsection{The biharmonic model problem}

Following the usual design principles of IgA,
we assume that the computational domain $\Omega\subset \mathbb R^d$ 
has a Lipschitz boundary $\partial \Omega$ and that it is parameterized
by a geometry function
\begin{equation}
\nonumber
		\textbf G: \widehat \Omega =(0,1)^d\rightarrow \Omega
		=\textbf G(\widehat \Omega),
\end{equation}
whose third weak derivatives are almost everywhere uniformly bounded. The parameterization has the property 
\begin{equation}
\label{eq:GeoMapCond}
    \|\nabla^r \mathbf{G}\|_{L^\infty(\widehat \Omega)} \leq c_1 \quad \text{and} \quad \|\left(\nabla^r \mathbf{G}\right)^{-1}\|_{L^\infty(\widehat \Omega)} \leq c_2, \quad  \text{for}\quad r=1,2,3,
\end{equation}
for some constants $c_1$ and $c_2$.

After homogenization, the variational formulation of the model problem
\eqref{eq:probStrong} reads as follows.
Given $f\in L^2(\Omega)$ and $\beta\in\mathbb{R}$ with $\beta\geq 0$, find $u\in V:=H^2(\Omega)\cap H^1_0(\Omega)$ such that 
\begin{equation}\label{eq:problem1}
\beta(u, v)_{L^2(\Omega)}+(\Delta u, \Delta v)_{L^2(\Omega)}= (f, v)_{L^2(\Omega)}  \quad \foralls v \in V.
\end{equation}
Here and in what follows, $L^2(\Omega)$ and $H^r(\Omega)$ denote the standard Lebesgue and
Sobolev spaces with standard inner products $(\cdot,\cdot)_{L^2(\Omega)}$, $(\cdot,\cdot)_{H^r(\Omega)}$ and norms $\|\cdot \|_{L^2(\Omega)}$, $\|\cdot \|_{H^r(\Omega)}$. $H^1_0(\Omega)$ is the standard subspace of $H^1(\Omega)$ containing the functions with vanishing trace. 
On $V$, we define the bilinear form $\inner{\cdot}{\cdot}_{\mathcal{B}}$ via
\[
		\inner{u}{v}_{\mathcal{B}} := (\Delta u, \Delta v)_{L^2(\Omega)}
		\quad\foralls u,v\in V,
\]
which is an inner product since we have the Poincaré like inequality
\begin{equation}
  \label{eq:2FIne}
\|u\|_{H^2(\Omega)}\leq c_{\Omega}\|\Delta u\|_{L^2(\Omega)}
= c_{\Omega} \|u\|_{\mathcal B} \quad \foralls u\in V,
\end{equation}
where $c_{\Omega}$ is a constant that depends only on the shape
of $\Omega$, cf.~\cite{mardal2020robust}.

Using the substitution rule for integration and the chain rule for differentiation, \eqref{eq:problem1} can be expressed in terms of
integrals on the parameter domain $\widehat\Omega$. In IgA, this is usually
done in order to simplify the evaluation of the integrals using quadrature
rules. Besides these inner products, there are also standard inner products
for the parameter domain, like $(\cdot,\cdot)_{L^2(\widehat\Omega)}$
and $(\cdot,\cdot)_{\widehat{\mathcal{B}}}$, where the latter is given by
\[
		\inner{\widehat u}{\widehat v}_{\widehat{\mathcal{B}}}
		:= (\Delta \widehat u, \Delta \widehat v)_{L^2(\widehat \Omega)}
		\quad\foralls \widehat u,\widehat v\in \widehat V:=
		H^2(\widehat \Omega)\cap H^1_0(\widehat\Omega).
\]
Also for the parameter domain $\widehat\Omega$, the result~\eqref{eq:2FIne} holds. So, we know
\begin{equation}
\nonumber 
\|u\|_{H^2(\widehat \Omega)}\leq c_{\widehat \Omega}\|\Delta u\|_{L^2(\widehat\Omega)}
= c_{\widehat\Omega} \|u\|_{\widehat{\mathcal B}} \quad \foralls u\in \widehat V.
\end{equation}
We know (cf.~\cite{sogn2018schur})
that there exist constants $\underline{c}_M$, $\overline{c}_M$, 
$\underline{c}_B$ and $\overline{c}_B$ only depending on the constants $c_1$, $c_2$ and the shape of $\Omega$ such that
\begin{equation}
  \label{eq:geoEquiv}
  \begin{split}
  \underline{c}_M\, (u,u)_{L^2(\Omega)}
  &\leq (\widehat u,\widehat u)_{L^2(\widehat{\Omega})}
  \leq \overline{c}_M\, (u,u)_{L^2(\Omega)}
  \quad \text{and}\\
  \underline{c}_B\, (u,u)_{\mathcal B}
  & \leq (\widehat u,\widehat u)_{\widehat{\mathcal{B}}}
  \leq \overline{c}_B\, (u,u)_{\mathcal B}
  \end{split}
\end{equation}
for all $u\in V$ with $\widehat u = u\circ \bm{G}\in\widehat V$.
We define a simplified bilinear form  $\left(\cdot, \cdot\right)_{\bar{\mathcal{B}}}$ as the inner product obtained by removing the cross terms from the inner product $\left(\cdot, \cdot\right)_{\widehat{\mathcal{B}}}$, that is,
\begin{equation}
\nonumber
\left(\widehat u, \widehat v\right)_{\bar{\mathcal{B}}} := \sum^d_{k=1}\left(\partial_{x_kx_k} \widehat u , \partial_{x_kx_k} \widehat v \right)_{L^2(\widehat{\Omega})}
\quad\foralls \widehat u,\widehat v\in \widehat V. 
\end{equation}
Here and in what follows, $\partial_{x} := \frac{\partial}{\partial x}$ and $\partial_{xy} := \partial_{x} \partial_{y}$ and $\partial_{x}^r := \frac{\partial^r}{\partial x^r}$ denote partial derivatives.
The original bilinear form and the simplified bilinear form are spectrally equivalent, which implies that also the simplified bilinear form is an inner product.
\begin{lemma}
  \label{lemma:equivB}
  The inner products 
  $(\cdot,\cdot)_{\widehat{\mathcal{B}}}$
  and
  $(\cdot,\cdot)_{\bar{\mathcal{B}}}$ are spectrally
  equivalent, that is,
  \begin{equation}
    \nonumber
\left(\widehat u, \widehat u\right)_{\bar{\mathcal{B}}} \leq \left(\widehat u, \widehat u\right)_{\widehat{\mathcal{B}}} \leq d\left(\widehat u, \widehat u\right)_{\bar{\mathcal{B}}}
    \quad \foralls \widehat u\in \widehat V .
\end{equation} 
\end{lemma}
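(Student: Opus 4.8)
The plan is to treat the two inequalities separately: the upper bound $\left(\widehat u,\widehat u\right)_{\widehat{\mathcal{B}}}\le d\left(\widehat u,\widehat u\right)_{\bar{\mathcal{B}}}$ is elementary, while the lower bound $\left(\widehat u,\widehat u\right)_{\bar{\mathcal{B}}}\le\left(\widehat u,\widehat u\right)_{\widehat{\mathcal{B}}}$ is reduced to a single integration-by-parts identity for mixed second derivatives. For the upper bound I would argue pointwise: by the Cauchy--Schwarz inequality in $\mathbb{R}^d$, $\bigl(\sum_{k=1}^d \partial_{x_kx_k}\widehat u\bigr)^2\le d\sum_{k=1}^d \bigl(\partial_{x_kx_k}\widehat u\bigr)^2$ almost everywhere on $\widehat\Omega$; integrating over $\widehat\Omega$ and recalling $\Delta\widehat u=\sum_{k=1}^d\partial_{x_kx_k}\widehat u$ gives $\|\Delta\widehat u\|_{L^2(\widehat\Omega)}^2\le d\sum_{k=1}^d\|\partial_{x_kx_k}\widehat u\|_{L^2(\widehat\Omega)}^2$, which is exactly the claim. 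No boundary conditions enter here.

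For the lower bound, expand
\[
\left(\widehat u,\widehat u\right)_{\widehat{\mathcal{B}}}=\|\Delta\widehat u\|_{L^2(\widehat\Omega)}^2=\sum_{k=1}^d\|\partial_{x_kx_k}\widehat u\|_{L^2(\widehat\Omega)}^2+\sum_{k\ne l}\left(\partial_{x_kx_k}\widehat u,\partial_{x_lx_l}\widehat u\right)_{L^2(\widehat\Omega)} ,
\]
where the first term equals $\left(\widehat u,\widehat u\right)_{\bar{\mathcal{B}}}$. Hence it suffices to show that, for $k\ne l$,
\[
\left(\partial_{x_kx_k}\widehat u,\partial_{x_lx_l}\widehat u\right)_{L^2(\widehat\Omega)}=\left\|\partial_{x_kx_l}\widehat u\right\|_{L^2(\widehat\Omega)}^2\ge 0 .
\]
I would establish this identity first for $\widehat u\in C^\infty(\overline{\widehat\Omega})$ with $\widehat u|_{\partial\widehat\Omega}=0$ by integrating by parts twice, once in $x_k$ and once in $x_l$. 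All boundary terms vanish: the only faces of $\widehat\Omega=(0,1)^d$ on which the outward normal has a nonzero $k$-th component are $\{x_k=0\}$ and $\{x_k=1\}$, and on these $\widehat u$ is identically zero, so every tangential derivative of $\widehat u$ there --- in particular $\partial_{x_lx_l}\widehat u$, since $l\ne k$ --- vanishes; symmetrically, $\partial_{x_k}\widehat u$ vanishes on $\{x_l=0\}\cup\{x_l=1\}$. Thus the first integration by parts turns the left-hand side into $-\left(\partial_{x_k}\widehat u,\partial_{x_kx_lx_l}\widehat u\right)_{L^2(\widehat\Omega)}$ with no boundary contribution, and the second turns this into $\left(\partial_{x_kx_l}\widehat u,\partial_{x_kx_l}\widehat u\right)_{L^2(\widehat\Omega)}$, again with no boundary contribution. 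Finally I would extend the identity to all of $\widehat V$ by density: both sides are continuous bilinear forms in $\widehat u$ bounded by $\|\widehat u\|_{H^2(\widehat\Omega)}^2$, and smooth functions vanishing on $\partial\widehat\Omega$ are dense in $\widehat V$ with respect to the $H^2(\widehat\Omega)$-norm --- for the cube this follows, e.g., by odd reflection of $\widehat u$ across the faces of $\widehat\Omega$ followed by mollification with a symmetric kernel.

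The main obstacle is the bookkeeping in the integration-by-parts step, where third-order mixed derivatives appear in the intermediate expressions and one must verify that each of the $2d$ face contributions indeed drops out; this is precisely where the product structure of the cube (on each face a single coordinate is frozen, so $\widehat u$ and all its tangential derivatives vanish there) is used. A secondary technical point is the density of smooth functions vanishing on $\partial\widehat\Omega$ in $\widehat V$, which is needed to lift the identity from the smooth case to a general $\widehat u\in\widehat V$; for a general Lipschitz domain one would have to be more careful, but for the cube $\widehat\Omega=(0,1)^d$ the reflection argument above keeps everything elementary.
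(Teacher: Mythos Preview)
Your proof is correct and follows essentially the same route as the paper: both arguments use Cauchy--Schwarz for the upper bound and the identity $(\partial_{x_kx_k}\widehat u,\partial_{x_lx_l}\widehat u)_{L^2(\widehat\Omega)}=\|\partial_{x_kx_l}\widehat u\|_{L^2(\widehat\Omega)}^2$ for the lower bound. The only difference is that the paper obtains this identity by citing Grisvard's result $\|\Delta\widehat u\|_{L^2(\widehat\Omega)}=\|\nabla^2\widehat u\|_{L^2(\widehat\Omega)}$ on $\widehat V$, whereas you prove it directly for the cube via integration by parts and density --- a more self-contained but equivalent argument.
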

\begin{proof}
  From \cite{grisvard2011elliptic, grisvard1992singularities},
  it follows that $\|\Delta \widehat u\|_{L^2(\widehat\Omega)}
  = \|\nabla^2 \widehat v\|_{L^2(\widehat\Omega)}$
  for $\widehat u,\widehat v\in \widehat V$. Using this, we obtain
	\begin{align*}
	\|\widehat u\|_{\widehat{\mathcal{B}}}^2
		& = \|\Delta \widehat u\|_{L^2(\widehat\Omega)}^2
		= \|\nabla^2\widehat u\|_{L^2(\widehat\Omega)}^2
		 = \underbrace{
		 \sum_{k=1}^d
		 \|\partial_{x_kx_k}\widehat u\|_{L^2(\widehat\Omega)}^2
		 }_{\displaystyle =\|\widehat u\|_{\bar{\mathcal{B}}}^2}
		 + 
		 \underbrace{
		 \sum_{k=1}^d\sum_{l\in\{1,\ldots,d\}\backslash\{k\}}
		 \|\partial_{x_kx_l}\widehat u\|_{L^2(\widehat\Omega)}^2
		 }_{\displaystyle \ge 0},
	\end{align*}
	which shows the first side of the inequality.
	Using the Cauchy-Schwarz inequality
	and $ab \le \tfrac12( a^2+ b^2)$, we obtain
	\begin{align*}
	\|\widehat u\|_{\widehat{\mathcal{B}}}^2 &= \sum_{k=1}^d\sum_{l=1}^d \left(\partial^2_{x_k} \widehat u,\partial^2_{x_l} \widehat u\right)_{L^2(\widehat\Omega)}
		\le \frac{1}{2}  \sum_{k=1}^d\sum_{l=1}^d \left( \left\|\partial_{x_k}^2 \widehat u\right\|_{L^2(\widehat\Omega)}^2 + \left\|\partial_{x_l}^2 \widehat u\right\|_{L^2(\widehat\Omega)}^2\right)
		= d \|\widehat u\|_{\bar{\mathcal{B}}}^2,
	\end{align*}
	which shows second side of the inequality.
\end{proof}
\begin{remark}
A analogous result holds for the domain $\Omega$, which satisfies condition \eqref{eq:GeoMapCond}. In this case, the constants also depend on
the shape of $\Omega$.
\end{remark}

\subsection{Discretization}

We consider a discretization using tensor product B-splines in the context of IgA. We start by defining these splines on the parameter domain $\widehat \Omega$. 
Let $C^k(0,1)$ denote the space of all continuous functions mapping $(0,1)\rightarrow \mathbb{R}$ that are $k$ times continuously differentiable and let $\mathcal{P}_p$ be the space of polynomials of degree at most $p$.
For any sequence of grid points $\bm{\tau}:= (\tau_0,\ldots,\tau_{N+1})$ with
\[
0 = \tau_0 < \tau_1 <\cdots < \tau_N < \tau_{N+1} = 1,
\]
we define the space $S_{p,\bm\tau}$ of splines of degree $p$ with maximum smoothness by
\[
		S_{p,\bm\tau} := \left\lbrace v\in C^{p-1}(0,1) : v|_{(\tau_j,\tau_{j+1})}\in \mathcal{P}_p,\; j=0,1,\ldots,N \right\rbrace.
\]
The size of the largest and the smallest interval are denoted by
\begin{equation}
    \nonumber
		h_{\bm\tau} := \max_{j=0,\ldots,N}(\tau_{j+1}-\tau_j) 
		\quad\text{and}\quad
		h_{\bm\tau,\mathrm{min}} :=\min_{j=0,\ldots,N}(\tau_{j+1}-\tau_j),
\end{equation}
respectively. For the parameter domain, we define a spline space by
tensorization, which we transfer to the physical domain using the
pull-back principle, thus we define for given sequences of grid points 
$\bm{\tau}_{\ell,1},\ldots,\bm{\tau}_{\ell,d}$ the spaces
\[
		\widehat V_\ell := 
		\left(
		\bigotimes^d_{i=1} S_{p,\bm{\tau}_{\ell,i}}
		\right) \cap H^1_0(\widehat \Omega)\subset \widehat V
		\quad\text{and}\quad
		V_\ell := \{ f\circ \textbf G^{-1} : f\in \widehat V_\ell \}\subset V.
\]
Here and in what follows, the tensor product space $\bigotimes^d_{i=1} S_{p,\bm{\tau}_{\ell,i}}$ is the space of all linear combinations of functions of the form $v(x_1,\ldots,x_d)=v_1(x_1)\cdots v_d(x_d)$ with $v_i\in S_{p,\bm{\tau}_{\ell,i}}$. The spline degree $p$ could be different for each of the spacial directions. For notational convenience, we restrict ourselves to a uniform choice of the degree.

The corresponding minimum and maximum grid size are denoted by
\[
		h_\ell := \max_{i=1,\ldots,d} h_{\bm{\tau}_{\ell,i}}
		\quad\text{and}\quad
		h_{\ell,\mathrm{min}} := \min_{i=1,\ldots,d} h_{\bm{\tau}_{\ell,i},\mathrm{min}}.
\]
For the multigrid methods we, set up a sequence nested spline spaces
\[
		V_0 \subset V_1 \subset \cdots \subset V_{L} \subset V
		\quad\text{with}\quad h_0>h_1> \cdots >h_L>0
\]
based on a sequence of nested grids.

We assume that all grids are quasi uniform, that is, there is a constant $c_q$ such that
\begin{equation}
	  \label{eq:quasiuniform}
    h_\ell \leq c_q \,h_{\ell,\mathrm{min}} \quad \text{for} \quad \ell = 0,1,\ldots, L.
\end{equation}
We also assume that the ratio of the grid sizes of any two consecutive grids is bounded, that is, there is a constant $c_r$ such that
\begin{equation}
    \label{eq:assGrids}
    h_\ell \leq c_r\, h_{\ell-1}  \quad \text{for} \quad \ell = 1,\ldots, L.
\end{equation}
If the grids are obtained by uniform refinements of the coarsest grid, then this condition is naturally satisfied with $c_r=2$.

By applying a Galerkin discretization, we obtain the following discrete problem: Find $u_\ell \in V_\ell$ such that
\begin{equation}
  \label{eq:probDisc}
\beta (u_\ell,v_\ell)_{L^2(\Omega)}+ (u_\ell,v_\ell)_{\mathcal{B}} = (f, v_\ell)_{L^2(\Omega)}\quad \foralls v_\ell \in V_\ell.
\end{equation}
By fixing a basis for the space $V_\ell$, we can rewrite \eqref{eq:probDisc} in matrix-vector notation as
\begin{equation}
  \label{eq:probMat}
  	(\beta \mathcal{M}_\ell+\mathcal{B}_\ell)\uv{\ell}
  	= \underline{f}_{\ell},
\end{equation}
where $\mathcal{B}_\ell$ is the biharmonic stiffness matrix, $\mathcal{M}_\ell$ is the mass matrix, $\uv{\ell}$ is the vector representation of the corresponding function $u_\ell$ with respect to the chosen basis and the vector $\underline{f}_{\ell}$ is obtained by testing the right-hand side functional $(f, \cdot)_{L^2(\Omega)}$ with the basis functions.

\begin{notation}
  \label{notation:c}
  Throughout this paper, $c$ is a generic positive constant that is
  independent
  of $h$ and $p$, but may depend on $d$, the constants
  $c_1$, $c_2$, $c_q$, and $c_r$ and the shape of $\Omega$.
\end{notation}

For any two square matrices $A,B\in\mathbb R^{n\times n}$,
$A\le B$ means that
\[
	\underline x^T A \underline x
	\leq
	\underline x^T B \underline x \quad \foralls 
	\underline x\in\mathbb{R}^n.
\]

\section{The multigrid solver}
\label{sec:MG}
In this section, we present an abstract multigrid method and give a convergence theorem
that is based on the analysis by Bramble et al., see
\cite[Theorem 1]{bramble1991convergence}. 

\subsection{The multigrid framework}
Let us assume that we have nested spaces $V_0\subset V_1 \subset \cdots \subset V_L\subset V$. Let $I^{\ell}_{\ell-1}$ be the matrix representation of the canonical embedding from $V_{\ell-1}$ into $V_{\ell}$ and let the
restriction matrix $I^{\ell-1}_{\ell}$ be its transpose, this is
$I^{\ell-1}_{\ell} := (I^{\ell}_{\ell-1})^T$.

On each grid level, $\ell=0,\ldots,L$, we have a linear system
\[
		\mathcal A_\ell \, \underline u_\ell = \underline f_\ell,
\]
which is obtained by discretizing a symmetric, bounded and coercive bilinear form $a(\cdot,\cdot)$ in the space $V_\ell$ using the
Galerkin principle. The matrix induces a norm via
$\|\uv{\ell}\|_{\mathcal{A}_\ell} :=(\mathcal A_\ell \uv{\ell},\uv{\ell})^{1/2} = \|\mathcal A_\ell^{1/2} \uv{\ell}\|$. Here and in what follows, $(\cdot,\cdot)$ and $\|\cdot\|$ are the Euclidean scalar product and norm, respectively. In the continuous setting, the matrix can be represented by an operator
\[
		\mathcal{A}:V\rightarrow V'
		\quad\text{with}\quad
		\mathcal A u = a(u,\cdot).
\]
We have $\|u_{\ell}\|_{\mathcal{A}} = \|\uv{\ell}\|_{\mathcal{A}_\ell}$ for all functions $u_\ell\in V_\ell$ with coefficient representation $\uv{\ell}$.

For the analysis, we can additionally choose symmetric positive definite
matrices $X_\ell$ for all grid levels $\ell=0,1,\ldots,L$, which induce
norms via $\|\uv{\ell}\|_{X_{\ell}}  = (X_{\ell} \uv{\ell},\uv{\ell})^{1/2}  = \|X^{1/2}_{\ell}\uv{\ell}\|$. The norm $\|u_{\ell}\|_{X_{\ell}}$ of a function $u_{\ell}\in V_\ell$ is interpreted as $\|\uv{\ell}\|_{X_{\ell}}$, where $\uv{\ell}$ is the coefficient representation of $u_\ell$. 

For the abstract framework, we assume to have a symmetric and positive definite matrix $\Smo{\ell}$ for every grid level $\ell=1,\ldots,L$, representing the smoother.

Later, for the model problem, the bilinear form $a(\cdot,\cdot)$, 
the matrices $\mathcal{A}_\ell$, $\ell=0,\ldots,L$ and our
choice of ${X}_\ell$ will be
\[
		a(u,v) = \beta (u,v)_{L^2(\Omega)} + (u,v)_{\mathcal B},
	  \quad
		\mathcal{A}_\ell=\beta\mathcal{M}_\ell+\mathcal{B}_\ell
		\quad\text{and}\quad
		X_\ell=(\beta+h_\ell^{-4})\mathcal{M}_\ell+\mathcal{B}_\ell.
\]
As smoothers, we will choose a subspace corrected mass smoother, a
symmetric Gauss-Seidel smoother and a hybrid smoother in
Section~\ref{sec:smoothers}.

Based on these choices, the overall algorithm reads as follows.
\begin{algorithms}
  \label{algo:S}
One multigrid cycle, applied to some iterate $\underline u_\ell^{(0)}$
and a right-hand side $\underline f_\ell$ consists of the following steps:
\begin{itemize}
		\item Apply $\nu_\ell$ pre-smoothing steps, i.e., compute
		\begin{equation}
      \label{eq:algo:Smooth}
      \underline{u}_\ell^{(i)} = \underline{u}_\ell^{(i-1)}
      + \Smo{\ell}(\underline{f}_{\ell}-\mathcal{A}_\ell\underline{u}_\ell^{(i-1)})
      \quad \mbox{for} \quad i=1,\ldots,\nu_\ell.
    \end{equation}
    \item Apply recursive coarse-grid correction, i.e.,
    apply the following steps. Compute the residual and
    restrict it to the next coarser grid level:
    \[
    		\underline r_{\ell-1}
    		=
    		I_\ell^{\ell-1} 
    		(\underline{f}_{\ell}-\mathcal{A}_\ell\underline{u}_\ell^{(\nu_\ell)}).
    \]
    If $\ell-1=0$, compute the update $\underline q_0:= A_0^{-1}
    \underline r_0$ using a direct solver. Otherwise, compute the
    update $\underline q_{\ell-1}$ by applying the algorithm $r$
    ($r\in\mathbb N:=\{1,2,\ldots\}$)
    times recursively to the right-hand side $\underline r_{\ell-1}$
    and a zero vector as initial guess. Then set
    \[
    		\underline u^{(\nu_\ell+1)}_\ell =
    		\underline u^{(\nu_\ell)}_\ell + I_{\ell-1}^\ell
    		\underline q_{\ell-1}.
    \]
    \item Apply $\nu_\ell$ post-smoothing steps, i.e., compute
    $\underline u_\ell^{(i)}$ using~\eqref{eq:algo:Smooth}
    for $i=\nu_\ell+2,\ldots,2\nu_\ell+1$ to obtain the next iterate
    $\underline u_\ell^{(2\nu_\ell+1)}$.
\end{itemize}
\end{algorithms}
This abstract algorithm coincides with the algorithm presented in~\cite{bramble1991convergence}.
Since each multigrid cycle is linear, its application 
can be expressed by the matrix $B_\ell^s$, which is recursively
given by $B_0^s := \mathcal A_0^{-1}$ and
\[
		B_\ell^s  :=
		\big(
		I-(I-\tau_\ell L_\ell^{-1}\mathcal A_\ell)^{\nu_\ell}
		(I-I_{\ell-1}^\ell B_{\ell-1}^s I_\ell^{\ell-1} \mathcal A_\ell)^r
		(I-\tau_\ell L_\ell^{-1}\mathcal A_\ell)^{\nu_\ell}
		\big)\mathcal A_\ell^{-1},
		\quad \ell=1,\ldots,L.
\]
The iteration matrix corresponding to one multigrid cycle is given
by
\[
		I-B_\ell^s \mathcal A_\ell
		=
		(I-\tau_\ell L_\ell^{-1}\mathcal A_\ell)^{\nu_\ell}
		(I-I_{\ell-1}^\ell B_{\ell-1}^s I_\ell^{\ell-1} \mathcal A_\ell)^r
		(I-\tau_\ell L_\ell^{-1}\mathcal A_\ell)^{\nu_\ell},
		\quad\ell=1,\ldots,L
		.
\]
\begin{remark}
  The integer $r$ represents the recursively of the algorithm, where $r=1$ corresponds to the $V$-cycle and $r=2$ corresponds to the $W$-cycle.
\end{remark}
\subsection{Abstract convergence framework}
The assumptions used to show convergence can be split into two groups: \textit{approximation properties} and \textit{smoother properties}. 
\begin{theorem}\label{thrm:abstract}
	Let $\lambda_\ell$ be the largest eigenvalue
	of $X^{-1}_\ell\mathcal{A}_\ell$. Assume that the following
	estimates hold:
	\begin{itemize}
			\item \emph{Approximation properties.} There are constants
			$C_1$ and $C_2$, independent of $\ell$, and linear operators
			$Q_\ell: V_L \rightarrow V_\ell$ for $\ell = 0,1,\ldots,L$
			with $Q_L=I$ such that
			\begin{align}
			\label{eq:ass:approx1}
			  \|(Q_{\ell} -Q_{\ell-1})u_{L}\|^2_{X_\ell} &\leq C_1 \lambda_{\ell}^{-1} (u_{L},u_{L})_{\mathcal{A}} \quad &\text{for}\quad& \ell = 1,\ldots,L,\\
			\label{eq:ass:approx2}
			  (Q_{\ell}u_{L},Q_{\ell}u_{L})_{\mathcal{A}} &\leq C_2  (u_{L},u_{L})_{\mathcal{A}} \quad &\text{for}\quad& \ell = 0,\ldots,L-1,
			\end{align} 
			for all $u_{L} \in V_L$.			
			\item \emph{Smother properties.} We assume there exist
				a constant $C_S$ independent of $\ell$ such that 
		  \begin{equation}
  			  \label{eq:B34}
  				\frac{\|\uv{\ell}\|^2_{X_\ell}}{\lambda_{\ell}} \leq C_S  (\Smo{\ell}X_\ell \uv{\ell},\uv{\ell})_{X_\ell} \quad \foralls \uv{\ell} \in \mathbb{R}^{\dim V_{\ell}}
			\end{equation}
			and
			\begin{equation}
      \label{eq:PositiveSmo}
  (\Smo{\ell}\mathcal{A}_\ell \uv{\ell},\uv{\ell})_{\mathcal{A}_\ell} \leq   ( \uv{\ell},\uv{\ell})_{\mathcal{A}_\ell} \quad \foralls  \uv{\ell}\in\mathbb{R}^{\dim V_\ell}
  		\end{equation}
			holds for $\ell=1,\ldots,L$.
	\end{itemize}
	Then, the estimate
  \[
  \left((I-B^s_L\mathcal{A}_L)\uv{L},\uv{L}\right)_{\mathcal{A}_L}
  \leq \left(1-\frac{1}{CL}\right)
  \left(\uv{L},\uv{L}\right)_{\mathcal{A}_L},
  \]
  holds for all $\uv L \in \mathbb R^{\dim V_L}$,
  where $C = [1+C_2^{1/2}+(C_SC_1)^{1/2}]^{2}$.
\end{theorem}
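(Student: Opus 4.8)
The plan is to follow the structure of the multigrid convergence proof by Bramble et al.\ \cite{bramble1991convergence}, adapted to our notation. The estimate to be shown is exactly the conclusion of \cite[Theorem~1]{bramble1991convergence}, so the task reduces to verifying that our hypotheses imply theirs and then transcribing the argument. First I would recall the key idea: the error propagation operator can be analyzed level-by-level using the orthogonal-like decomposition induced by the operators $Q_\ell$. The scalar $\lambda_\ell$, the largest eigenvalue of $X_\ell^{-1}\mathcal{A}_\ell$, provides the natural scaling that links the smoother (acting in the $X_\ell$-geometry via~\eqref{eq:B34} and~\eqref{eq:PositiveSmo}) to the approximation properties~\eqref{eq:ass:approx1}--\eqref{eq:ass:approx2} (also stated in the $X_\ell$-geometry). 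The combination $C = [1+C_2^{1/2}+(C_SC_1)^{1/2}]^2$ appearing in the conclusion is precisely the constant one obtains by tracking these two estimates through the telescoping argument.

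The key steps, in order, are as follows. (i) Set $E_\ell := I - B_\ell^s\mathcal{A}_\ell$ and write the $\mathcal{A}_L$-symmetric form of $E_L$; the pre- and post-smoothing being symmetric (each is $(I-\tau_\ell L_\ell^{-1}\mathcal{A}_\ell)^{\nu_\ell}$), $E_L$ is $\mathcal{A}_L$-self-adjoint and, by~\eqref{eq:PositiveSmo}, non-negative, so it suffices to bound its largest eigenvalue. (ii) Use the recursion and the approximation operators $Q_\ell$ to write, for any $u_L\in V_L$, the telescoping identity $u_L = \sum_{\ell=1}^{L}(Q_\ell-Q_{\ell-1})u_L + Q_0 u_L$, and estimate the energy of each component. (iii) The heart of the matter is the key inequality of Bramble et al.: one shows that $(E_L u_L,u_L)_{\mathcal{A}_L}$ is controlled by $L$ times a per-level term, where each per-level term couples the smoothing estimate~\eqref{eq:B34} with the approximation estimate~\eqref{eq:ass:approx1}; the boundedness~\eqref{eq:ass:approx2} of $Q_\ell$ in the energy norm controls the cross terms and the coarsest-level contribution. (iv) Combine these to obtain $(E_L u_L,u_L)_{\mathcal{A}_L}\le (1-1/(CL))(u_L,u_L)_{\mathcal{A}_L}$ with the stated $C$. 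Throughout, one uses that $X_\ell$ is symmetric positive definite so that $\|\cdot\|_{X_\ell}$ and the operator $\tau_\ell L_\ell^{-1}$ interact cleanly, and that $\lambda_\ell^{-1}$ is the correct normalization because $\mathcal{A}_\ell \le \lambda_\ell X_\ell$.

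The main obstacle is step (iii): establishing the Bramble--Pasciak--Wang type estimate that converts the two assumptions into the $O(1/L)$ contraction. This is not a routine calculation but rather the structural core of the framework; it requires carefully splitting $B_L^s\mathcal{A}_L$ against the coarse-grid corrections, summing a geometric-type series over the levels (which is where the single factor of $L$, hence the logarithmic dependence on $h$ via $L\sim\log h^{-1}$, enters), and bookkeeping the constants so that they assemble into $[1+C_2^{1/2}+(C_SC_1)^{1/2}]^2$. Since this is verbatim the argument of \cite[Theorem~1]{bramble1991convergence}, the cleanest route is to state that our setting (symmetric smoother, SPD matrices $X_\ell$, the eigenvalue $\lambda_\ell$, and the four numbered assumptions) is a special case of the hypotheses there — in particular matching their "regularity-free" assumptions to~\eqref{eq:ass:approx1}--\eqref{eq:ass:approx2} and their smoother conditions to~\eqref{eq:B34}--\eqref{eq:PositiveSmo} — and invoke the theorem, rather than reproducing the multi-page proof. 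I would only reproduce the proof in full if the paper needs to be self-contained; otherwise a short paragraph pinpointing the correspondence of assumptions suffices.
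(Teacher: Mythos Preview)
Your proposal is correct and matches the paper's approach exactly: the paper does not reproduce the argument at all but simply writes ``For a proof, see~\cite[Theorem~1]{bramble1991convergence}.'' Your outline of the Bramble--Pasciak--Wang argument is accurate, and your recommendation to invoke the cited theorem rather than reproduce it is precisely what the authors do.
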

For a proof, see~\cite[Theorem~1]{bramble1991convergence}.
\begin{remark}
  Condition \eqref{eq:B34} is only required for functions $u_\ell$
  in the range of $Q_{\ell} - Q_{\ell-1}$. However, since we do
  not exploit this, we have stated the stronger condition.
\end{remark}

Now, we provide
conditions that guarantee \eqref{eq:B34} and \eqref{eq:PositiveSmo},
which fit our needs better than the original conditions.
\begin{lemma}\label{lem:smo1}
  If there exists a constant $C_S$, independent of $\ell$, which satisfies
  \begin{equation}
    \label{eq:smo1}
  (\mathcal{A}_\ell\uv{\ell},\uv{\ell}) \leq\frac{1}{\tau_{\ell}}(L_{\ell}\uv{\ell},\uv{\ell}) \leq \lambda_\ell C_S (X_{\ell}\uv{\ell},\uv{\ell}) \quad \foralls \uv{\ell}\in \mathbb{R}^{\dim V_{\ell}} 
  \end{equation}
  for each $\ell = 1,\ldots, L$. Then, the
  assumptions~\eqref{eq:B34} and \eqref{eq:PositiveSmo} hold for the same $C_S$.
  \end{lemma}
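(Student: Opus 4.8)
The plan is to show that assumption~\eqref{eq:smo1} implies both~\eqref{eq:B34} and~\eqref{eq:PositiveSmo} by essentially unwinding the definition $\Smo{\ell} = \tau_\ell L_\ell^{-1}$ and applying the congruence transformations that relate the Euclidean inner product to the $X_\ell$- and $\mathcal{A}_\ell$-inner products. The key observation is that for a symmetric positive definite matrix $Y$ and a symmetric matrix $Z$, the inequality $(Z\uv{},\uv{})_Y \le c\,(\uv{},\uv{})_Y$ for all $\uv{}$ is equivalent to $Y^{1/2}ZY^{-1/2} \le cI$, which (since $Y^{1/2}ZY^{-1/2}$ is similar to $ZY^{-1}\!\cdot\!Y$, hence has the same eigenvalues as... ) reduces to a statement about eigenvalues of products that one can read off from~\eqref{eq:smo1}.

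First I would prove~\eqref{eq:PositiveSmo}. We have $(\Smo{\ell}\mathcal A_\ell \uv{\ell},\uv{\ell})_{\mathcal A_\ell} = (\mathcal A_\ell \tau_\ell L_\ell^{-1}\mathcal A_\ell \uv{\ell},\uv{\ell})$. Setting $\underline y = \mathcal A_\ell^{1/2}\uv{\ell}$, this equals $\tau_\ell(\mathcal A_\ell^{1/2}L_\ell^{-1}\mathcal A_\ell^{1/2}\underline y,\underline y)$, so it suffices to show $\tau_\ell \mathcal A_\ell^{1/2}L_\ell^{-1}\mathcal A_\ell^{1/2} \le I$, i.e. $\tau_\ell \mathcal A_\ell \le L_\ell$ in the matrix order sense. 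That is precisely the first inequality in~\eqref{eq:smo1} (after multiplying by $\tau_\ell$ and dividing by $\tau_\ell$, i.e. it is literally $(\mathcal A_\ell\uv{\ell},\uv{\ell}) \le \tfrac1{\tau_\ell}(L_\ell\uv{\ell},\uv{\ell})$). So~\eqref{eq:PositiveSmo} follows immediately.

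Next I would prove~\eqref{eq:B34}. Rewrite the right-hand side: $(\Smo{\ell}X_\ell \uv{\ell},\uv{\ell})_{X_\ell} = (X_\ell \tau_\ell L_\ell^{-1}X_\ell \uv{\ell},\uv{\ell})$. Substituting $\underline z = X_\ell^{1/2}\uv{\ell}$ gives $\tau_\ell(X_\ell^{1/2}L_\ell^{-1}X_\ell^{1/2}\underline z,\underline z)$, while the left-hand side is $\lambda_\ell^{-1}(X_\ell\uv{\ell},\uv{\ell}) = \lambda_\ell^{-1}(\underline z,\underline z)$. Hence~\eqref{eq:B34} with constant $C_S$ is equivalent to $\lambda_\ell^{-1}I \le C_S\tau_\ell X_\ell^{1/2}L_\ell^{-1}X_\ell^{1/2}$, i.e. to $\tfrac1{\tau_\ell}L_\ell \le \lambda_\ell C_S X_\ell$ in the matrix order sense, which is exactly the second inequality in~\eqref{eq:smo1}. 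So again the claim is immediate.

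I expect there to be essentially no real obstacle here: the lemma is a bookkeeping statement turning a two-sided matrix-order inequality into the two operator-inequality assumptions of Theorem~\ref{thrm:abstract}. The only point requiring a little care is making the passage from $(Z\uv{},\uv{})_Y \le c(\uv{},\uv{})_Y$ for all $\uv{}$ to the matrix inequality $ZY \le cY$ (or rather the symmetrized form $Y^{1/2}ZY^{1/2}$ versus $Y$) rigorous — but since $L_\ell$, $X_\ell$, $\mathcal A_\ell$ are all symmetric positive definite and $\Smo{\ell} = \tau_\ell L_\ell^{-1}$ is symmetric, every product that appears can be symmetrized by the square-root substitution as above, and the definition of $A \le B$ for matrices given in the text (namely $\underline x^T A\underline x \le \underline x^T B\underline x$ for all $\underline x$) is exactly what one needs. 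I would therefore present the two cases in the order above, each as a one-line substitution argument, and conclude that~\eqref{eq:B34} and~\eqref{eq:PositiveSmo} hold with the same $C_S$ as in~\eqref{eq:smo1}.
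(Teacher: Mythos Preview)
Your proposal is correct. Both parts are indeed immediate once one recognizes that \eqref{eq:PositiveSmo} is equivalent to $\tau_\ell \mathcal A_\ell \le L_\ell$ and \eqref{eq:B34} is equivalent to $\tfrac{1}{\tau_\ell}L_\ell \le \lambda_\ell C_S X_\ell$ in the Loewner order, and your square-root substitutions make this transparent.

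The paper's proof reaches the same conclusions but presents them slightly differently: instead of invoking the anti-monotonicity of matrix inversion on the positive-definite cone (implicit in your ``$\tau_\ell \mathcal A_\ell^{1/2}L_\ell^{-1}\mathcal A_\ell^{1/2}\le I \Leftrightarrow \tau_\ell \mathcal A_\ell \le L_\ell$'' step), it works with $(L_\ell^{-1}\underline w,\underline w)$ and $(X_\ell^{-1}\underline w,\underline w)$, applies the Cauchy--Schwarz inequality in the form $(A^{1/2}L^{-1}\underline w, A^{-1/2}\underline w) \le (AL^{-1}\underline w,L^{-1}\underline w)^{1/2}(A^{-1}\underline w,\underline w)^{1/2}$, and then substitutes $\underline w = \mathcal A_\ell \uv{\ell}$ (respectively $\underline w = X_\ell \uv{\ell}$). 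This is just the standard Cauchy--Schwarz derivation of the same order-reversal fact you use directly. Your route is a bit cleaner and more direct; the paper's route avoids appealing to the square-root/inverse monotonicity as a black box. Either way the lemma is a bookkeeping statement, as you note.
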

\begin{proof}
  We start by showing that the first inequality implies \eqref{eq:PositiveSmo}, i.e., the smoothing operator $I-\Smo{\ell} \mathcal{A}_\ell$ is nonnegative in $\mathcal{A}_\ell$.
  Let $\wv{\ell}\in \mathbb{R}^{\dim V_\ell}$ be an arbitrary vector. Using the Cauchy-Schwarz inequality and the first inequality in \eqref{eq:smo1}, we obtain
  \begin{align*}
    \tau_\ell(L^{-1}_\ell\wv{\ell},\wv{\ell}) &= \tau_\ell(\mathcal{A}^{1/2}_\ell L^{-1}_\ell\wv{\ell},\mathcal{A}^{-1/2}_\ell\wv{\ell})\\
    &\leq \tau_\ell(\mathcal{A}_\ell L^{-1}_\ell\wv{\ell},L^{-1}_\ell\wv{\ell})^{1/2}(\mathcal{A}^{-1}_\ell \wv{\ell},\wv{\ell})^{1/2} \\
    &\leq \tau^{1/2}_\ell(L^{-1}_\ell\wv{\ell},\wv{\ell})^{1/2}(\mathcal{A}^{-1}_\ell \wv{\ell},\wv{\ell})^{1/2}
  \end{align*}
  It follows that
  \[
  \tau_\ell(L^{-1}_\ell\wv{\ell},\wv{\ell}) \leq (\mathcal{A}^{-1}_\ell \wv{\ell},\wv{\ell})\quad \foralls \wv{\ell}\in \mathbb{R}^{\dim V_\ell}.
  \]
  By substituting $\wv{\ell}$ with $\mathcal{A}_\ell\uv{\ell}$, we get \eqref{eq:PositiveSmo}.
  Next, we use the Cauchy-Schwarz inequality and the second inequality in \eqref{eq:smo1} to show \eqref{eq:B34}.
  Let $\wv{\ell}\in \mathbb{R}^{\dim V_{\ell}}$, we have
  \begin{align*}
    (X^{-1}_{\ell}\wv{\ell},\wv{\ell}) &= (L_{\ell}^{1/2}X^{-1}_{\ell}\wv{\ell},L^{-1/2}_{\ell}\wv{\ell}) \leq
    (L_{\ell}X^{-1}_{\ell}\wv{\ell},X^{-1}_{\ell}\wv{\ell})^{1/2}(L^{-1}_{\ell}\wv{\ell},\wv{\ell})^{1/2}\\
    &\leq \tau^{1/2}_{\ell}\lambda_\ell^{1/2} {C}_S^{1/2}(X^{-1}_{\ell}\wv{\ell},\wv{\ell})^{1/2}(L^{-1}_{\ell}\wv{\ell},\wv{\ell})^{1/2}.
  \end{align*}
  By squaring the inequality, we get
  \[
  (X_{\ell}^{-1}\wv{\ell},\wv{\ell}) = \tau_\ell \lambda_\ell C_S(L_{\ell}^{-1} \wv{\ell},\wv{\ell})
  \quad \foralls \wv{\ell}\in \mathbb{R}^{\dim V_\ell}.
  \]
  By substituting $\wv{\ell}$ with $X_{\ell}\uv{\ell}$, we get~\eqref{eq:B34}.
\end{proof}

\section{Approximation error estimates}
\label{Approx}
In this section, we prove some approximation error estimates
and provide a projector which will be used to prove~\eqref{eq:ass:approx1}
and~\eqref{eq:ass:approx2}. 

\subsection{Error and stability estimates for the univariate case}
We start by introducing a periodic spline space.
For any given sequence of grid points $\bm\tau=(0,\tau_1,\ldots,\tau_N,1)$, we define
\[
	\bm{\tau}^{per} := (-1,-\tau_N,\cdots,-\tau_1,0,\tau_1,\cdots,\tau_N,1).
\]
For each $p\in \mathbb N$, we define the periodic spline space 
\begin{equation}
  \nonumber
S_{p,\bm{\tau}}^{per} := \left\lbrace v \in S_{p,\bm{\tau}^{per}}  \,:\, \partial^{l} v \left(-1\right) = \partial^{l}v\left( 1\right)\quad \foralls l\in\mathbb N_0 \mbox{ with } l<p \right\rbrace
\end{equation}
and a spline space with vanishing even derivatives on the boundary
\begin{equation}
  \label{eq:defSEV}
S^{0}_{p,\bm{\tau}} := \left\lbrace v \in S_{p,\bm\tau}  \,:\, \partial^{2l} v \left(0\right) = \partial^{2l}v\left(0 \right) = 0\quad \foralls l\in\mathbb N_0 \mbox{ with } 2l<p \right\rbrace.
\end{equation}
We also define the periodic Sobolev space
\begin{equation}
  \nonumber
H^{q}_{per}(-1,1) := \left\lbrace v \in H^q(-1,1)  \,:\, \partial^{l}v\left(-1\right)=\partial^{l} v\left( 1\right), \quad \foralls l\in\mathbb N_0 \mbox{ with } l<q \right\rbrace
\end{equation}
for each $q\in \mathbb N$.
  Let $\Pi^{per}_{p,\bm{\tau}}:H^2_{per}(-1,1) \rightarrow S_{p,\bm{\tau}}^{per}$ be the $H^2$-orthogonal projector satisfying
  \begin{align}
    \label{eq:perProj}
    \begin{split}
    \inner{\partial^2\Pi^{per}_{p,\bm{\tau}}u}{\partial^2 v}_{L^2(-1,1)} &= \inner{\partial^2 u}{\partial^2 v}_{L^2(-1,1)} \quad \foralls v\in S_{p,\bm{\tau}}^{per}, \\
    \inner{\Pi^{per}_{p,\bm{\tau}}u}{1}_{L^2(-1,1)} &= \inner{u}{1}_{L^2(-1,1)}.
    \end{split}
  \end{align}
We use the following approximation error estimate for spline spaces which does not require uniform knot spans.
\begin{theorem}
  \label{theo:espen4}
  For any $p \geq 3$, we have
  \begin{equation}
  \nonumber
    \|\partial^2(u-\Pi^{per}_{p,\bm{\tau}} u) \|_{L^2(-1,1)} \leq \frac{\hmax_{\bm{\tau}}^2}{\pi^2}  \|\partial^4 u \|_{L^2(-1,1)} \quad \foralls u \in H^4_{per}(-1,1).
  \end{equation}
\end{theorem}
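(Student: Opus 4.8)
The plan is to reduce the estimate on the periodic interval $(-1,1)$ to an estimate on $(0,1)$ via an eigenfunction expansion, exploiting the fact that the $H^2$-orthogonal projector $\Pi^{per}_{p,\bm{\tau}}$ onto the periodic spline space commutes with the relevant differential operator on eigenspaces. More concretely, I would work with the orthogonal decomposition of $H^2_{per}(-1,1)$ (and of $S^{per}_{p,\bm{\tau}}$) into the constants and the functions with zero mean, and on the zero-mean part use that $\partial^2$ is an isomorphism. The key structural fact, which I expect is available from the Sande--Vázquez--Buffa line of work cited in the paper (\cite{sande2020explicit,sande2019sharp}), is a Kolmogorov-type $n$-width characterization: the periodic spline space $S^{per}_{p,\bm{\tau}}$ of degree $p$ on $\bm{\tau}^{per}$ realizes, up to the right constant, the optimal $L^2$-approximation of the space spanned by the first few trigonometric modes, and the projector $\Pi^{per}_{p,\bm{\tau}}$ inherits a stability bound from this.

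The main steps, in order: (i) Write $v := u - \Pi^{per}_{p,\bm{\tau}} u$; by the defining relations \eqref{eq:perProj}, $v$ has zero mean and $\partial^2 v \perp \partial^2 S^{per}_{p,\bm{\tau}}$ in $L^2$. (ii) Introduce the auxiliary function $w$ solving $\partial^2 w = \partial^2 v$ with periodic boundary conditions and zero mean — i.e. $w = v$ — and observe that $\|\partial^2 v\|_{L^2}^2 = (\partial^2 u, \partial^2 v)_{L^2} - (\partial^2 \Pi^{per}_{p,\bm{\tau}} u, \partial^2 v)_{L^2}$, where the second term vanishes by Galerkin orthogonality, so $\|\partial^2 v\|_{L^2}^2 = (\partial^2 u, \partial^2 v)_{L^2}$. (iii) Integrate by parts twice using periodicity to get $\|\partial^2 v\|_{L^2}^2 = (\partial^4 u, v)_{L^2} = (\partial^4 u, u - \Pi^{per}_{p,\bm{\tau}} u)_{L^2}$, and then Cauchy--Schwarz gives $\|\partial^2 v\|_{L^2}^2 \le \|\partial^4 u\|_{L^2}\,\|u - \Pi^{per}_{p,\bm{\tau}} u\|_{L^2}$. (iv) It therefore remains to establish the $L^2$-error bound $\|u - \Pi^{per}_{p,\bm{\tau}} u\|_{L^2(-1,1)} \le (\hmax_{\bm{\tau}}^2/\pi^2)\,\|\partial^2 v\|_{L^2(-1,1)}$, i.e. an inverse-power gain of two derivatives with the sharp constant $1/\pi^2$ per derivative pair (the factor $\pi^{-2}$ here being the square of the first periodic eigenvalue's reciprocal square root on an interval of length $h$, namely $(h/\pi)$, rendered as $h^2/\pi^2$ for the double application). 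This is exactly the content of the sharp periodic spline approximation results: one shows that the worst-case ratio $\|u - \Pi^{per}_{p,\bm{\tau}} u\|_{L^2}/\|\partial^2 u\|_{L^2}$ over $H^2_{per}$ equals the square of the corresponding width, and that width is controlled by $\hmax_{\bm{\tau}}/\pi$ because the spline space contains enough low-frequency content. Combining (iii) and (iv) and dividing by $\|\partial^2 v\|_{L^2}$ yields the claim. The requirement $p \ge 3$ enters precisely at step (iv): one needs $S^{per}_{p,\bm{\tau}} \subset H^2_{per}$ with enough smoothness and polynomial degree for the $n$-width argument to deliver the $\hmax^2$ scaling uniformly in $p$.

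The hard part is step (iv) — the sharp, $p$-robust $L^2$-estimate for the periodic $H^2$-projector with the explicit constant $h^2/\pi^2$. The non-periodic and non-sharp versions of such estimates are classical, but getting the constant $\pi^{-2}$ (rather than some unspecified $c$) requires the periodic structure and the Kolmogorov $n$-width machinery: one identifies the extremal function with a trigonometric mode of wavelength comparable to $2\hmax_{\bm{\tau}}$ and checks that the spline space approximates all lower modes exactly enough. I would cite the relevant theorem from \cite{sande2020explicit} or \cite{sande2019sharp} for this width/approximation bound and then assemble the duality argument (i)--(iii) myself, since that part is short and self-contained. A secondary technical point is making sure the zero-mean normalization in \eqref{eq:perProj} is handled consistently so that no constant-mode contribution is lost in the integration by parts — but since $\partial^4 u$ pairs against the zero-mean function $v$, the constants drop out automatically.
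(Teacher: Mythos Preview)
The paper does not actually prove this theorem: its entire ``proof'' is the sentence ``For a proof, see \cite[Theorem~4]{sande2019sharp}.'' In other words, the $H^2$--$H^4$ estimate for the periodic $H^2$-projector \emph{is} Theorem~4 of \cite{sande2019sharp}, quoted verbatim.

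Your proposal takes a genuinely different route: rather than citing the $H^2$--$H^4$ bound directly, you run a duality argument (integration by parts plus Cauchy--Schwarz) to reduce it to an $L^2$--$H^2$ bound, which you then cite from the same reference. This is the mirror image of what the paper does next: in Theorem~\ref{theo:uniPerL2H2} the paper \emph{derives} the $L^2$--$H^2$ bound from Theorem~\ref{theo:espen4} by Aubin--Nitsche. Your direction is legitimate provided the $L^2$--$H^2$ bound is available in \cite{sande2019sharp,sande2020explicit} by an argument that does not itself rely on the $H^2$--$H^4$ bound (it is, via their integral-operator framework), but it buys nothing over the paper's one-line citation, since both approaches ultimately defer the hard work to the same source.

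There is one concrete gap in your step (iv). What you need for the argument to close is
\[
  \|v\|_{L^2(-1,1)} \le \frac{h_{\bm\tau}^2}{\pi^2}\,\|\partial^2 v\|_{L^2(-1,1)},
  \qquad v := u - \Pi^{per}_{p,\bm\tau} u,
\]
with $\partial^2 v$ on the right. What you describe obtaining from the $n$-width theory is the standard $L^2$--$H^2$ estimate
\[
  \|u - \Pi^{per}_{p,\bm\tau} u\|_{L^2(-1,1)} \le \frac{h_{\bm\tau}^2}{\pi^2}\,\|\partial^2 u\|_{L^2(-1,1)},
\]
with $\partial^2 u$ on the right. These are not the same inequality, and since $\|\partial^2 v\|\le\|\partial^2 u\|$ the first does not follow trivially from the second. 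The missing step is to apply the $L^2$--$H^2$ estimate to $v$ rather than to $u$: because $\Pi^{per}_{p,\bm\tau}$ is a projector, $\Pi^{per}_{p,\bm\tau} v = 0$, hence $\|v\| = \|v - \Pi^{per}_{p,\bm\tau} v\| \le (h_{\bm\tau}^2/\pi^2)\|\partial^2 v\|$. With that insertion your steps (i)--(iv) assemble correctly.
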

For a proof, see \cite[Theorem 4]{sande2019sharp}.

Using the $H^2$--$H^4$ result above and an Aubin-Nitsche duality trick,
we obtain the following $L^2$--$H^2$ result.
\begin{theorem}
  \label{theo:uniPerL2H2}
  For any $p \geq 3$, we have
  \begin{equation}
   \nonumber
    \|u-\Pi^{per}_{p,\bm{\tau}} u\|_{L^2(-1,1)} \leq \frac{\hmax_{\bm{\tau}}^2}{\pi^2}  \|\partial^2 u \|_{L^2(-1,1)} \quad \foralls u \in H^2_{per}(-1,1).
  \end{equation}
\end{theorem}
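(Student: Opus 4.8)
The plan is to use an Aubin--Nitsche duality argument, exploiting the self-adjointness of the relevant differential operator on the periodic space and the fact that $\Pi^{per}_{p,\bm{\tau}}$ is the $H^2$-orthogonal projector (modulo the mean-value normalization). First I would reduce to the mean-zero case: write $\bar u := u - \tfrac12\inner{u}{1}_{L^2(-1,1)}$, so that $\bar u$ has zero mean and, by the second line of \eqref{eq:perProj}, $\Pi^{per}_{p,\bm{\tau}}$ preserves the mean; hence $u - \Pi^{per}_{p,\bm{\tau}}u = \bar u - \Pi^{per}_{p,\bm{\tau}}\bar u$ and $\|\partial^2 \bar u\|_{L^2} = \|\partial^2 u\|_{L^2}$, so it suffices to prove the estimate for mean-zero $u$.

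Next I would set $e := u - \Pi^{per}_{p,\bm{\tau}}u$ and introduce the dual problem: find $w \in H^4_{per}(-1,1)$ with $\inner{w}{1}_{L^2(-1,1)}=0$ such that $\partial^4 w = e$ in the weak periodic sense, i.e. $\inner{\partial^2 w}{\partial^2 v}_{L^2(-1,1)} = \inner{e}{v}_{L^2(-1,1)}$ for all $v \in H^2_{per}(-1,1)$. This is solvable because $e$ has zero mean (both $u$ and $\Pi^{per}_{p,\bm{\tau}}u$ do), and one gets the a priori bound $\|\partial^4 w\|_{L^2(-1,1)} = \|e\|_{L^2(-1,1)}$ directly from testing with $v = w$ together with $\inner{\partial^2 w}{\partial^2 w}_{L^2} = \inner{e}{w}_{L^2}$ — actually the clean identity is simply $\partial^4 w = e$, giving $\|\partial^4 w\|_{L^2} = \|e\|_{L^2}$. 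Then I would compute
\[
\|e\|_{L^2(-1,1)}^2 = \inner{e}{\partial^4 w}_{L^2(-1,1)} = \inner{\partial^2 e}{\partial^2 w}_{L^2(-1,1)},
\]
using periodic integration by parts (all boundary terms cancel by periodicity of $e$ and $w$ and their derivatives up to order $3$). Here I must be a little careful about which regularity justifies the integration by parts, but $e \in H^2_{per}$ and $w \in H^4_{per}$ is enough.

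Now comes the use of Galerkin orthogonality: since $\Pi^{per}_{p,\bm{\tau}}$ is $H^2$-orthogonal, $\inner{\partial^2 e}{\partial^2 v}_{L^2(-1,1)} = 0$ for all $v \in S_{p,\bm{\tau}}^{per}$, so I may subtract $\partial^2(\Pi^{per}_{p,\bm{\tau}}w)$ from $\partial^2 w$ to get
\[
\|e\|_{L^2(-1,1)}^2 = \inner{\partial^2 e}{\partial^2(w - \Pi^{per}_{p,\bm{\tau}}w)}_{L^2(-1,1)} \le \|\partial^2 e\|_{L^2(-1,1)}\,\|\partial^2(w - \Pi^{per}_{p,\bm{\tau}}w)\|_{L^2(-1,1)}
\]
by Cauchy--Schwarz. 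To the second factor I apply Theorem~\ref{theo:espen4}, obtaining $\|\partial^2(w - \Pi^{per}_{p,\bm{\tau}}w)\|_{L^2} \le \frac{\hmax_{\bm{\tau}}^2}{\pi^2}\|\partial^4 w\|_{L^2} = \frac{\hmax_{\bm{\tau}}^2}{\pi^2}\|e\|_{L^2}$. To the first factor I apply Theorem~\ref{theo:espen4} once more, now noting $\|\partial^2 e\|_{L^2} = \|\partial^2(u - \Pi^{per}_{p,\bm{\tau}}u)\|_{L^2} \le \frac{\hmax_{\bm{\tau}}^2}{\pi^2}\|\partial^4 u\|_{L^2}$ — wait, that needs $u \in H^4$; for general $u \in H^2$ I instead want the \emph{stability} bound $\|\partial^2 e\|_{L^2} \le \|\partial^2 u\|_{L^2}$, which holds because $\Pi^{per}_{p,\bm{\tau}}$ is an $H^2$-orthogonal projection in the $\partial^2$-seminorm and hence $\|\partial^2 \Pi^{per}_{p,\bm{\tau}}u\|_{L^2} \le \|\partial^2 u\|_{L^2}$, so $\|\partial^2 e\|_{L^2}^2 = \|\partial^2 u\|_{L^2}^2 - \|\partial^2 \Pi^{per}_{p,\bm{\tau}}u\|_{L^2}^2 \le \|\partial^2 u\|_{L^2}^2$. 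Combining the two factors and cancelling one power of $\|e\|_{L^2}$ yields $\|e\|_{L^2(-1,1)} \le \frac{\hmax_{\bm{\tau}}^2}{\pi^2}\|\partial^2 u\|_{L^2(-1,1)}$, as claimed.

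The main obstacle I anticipate is the bookkeeping around the mean-value normalization and the solvability/regularity of the dual problem: one must verify that $e$ has zero mean so that $\partial^4 w = e$ is solvable in the periodic class, confirm $w$ indeed lies in $H^4_{per}(-1,1)$ (not merely $H^2$) so Theorem~\ref{theo:espen4} applies to it, and make sure the Galerkin orthogonality $\inner{\partial^2 e}{\partial^2 v}_{L^2}=0$ is only invoked for $v \in S_{p,\bm{\tau}}^{per}$ where it is actually available. None of these is deep, but they are the places where the periodic setup must be handled precisely; the rest is the standard duality chain.
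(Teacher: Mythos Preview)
Your proposal is correct and follows essentially the same Aubin--Nitsche duality argument as the paper: solve the periodic dual problem $\partial^4 w = e$, integrate by parts, use Galerkin orthogonality to replace $w$ by $w-\Pi^{per}_{p,\bm{\tau}}w$, apply Theorem~\ref{theo:espen4} to $w$, and finish with the $H^2$-stability $\|\partial^2 e\|_{L^2}\le\|\partial^2 u\|_{L^2}$. The only cosmetic difference is your preliminary reduction to mean-zero $u$: this is unnecessary, since the mean-value normalization in~\eqref{eq:perProj} already forces $e=u-\Pi^{per}_{p,\bm{\tau}}u$ to have zero mean for \emph{any} $u\in H^2_{per}$, which is exactly the compatibility condition needed for the dual problem (the paper phrases this as checking $\partial^3 w(1)=\partial^3 w(-1)$ so that $w\in H^4_{per}$).
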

\begin{proof}
  Let $u \in H^2_{per}(-1,1)$ be arbitrary but fixed. Let $w\in H^4(-1,1)\cap H^3_{per}(-1,1)$
  be such that $\partial^4 w = u - \Pi^{per}_{p,\bm{\tau}} u$. 
  Note that~\eqref{eq:perProj} gives $0=(u - \Pi^{per}_{p,\bm{\tau}} u,1)_{L^2(-1,1)}
  = (\partial^4 w,1)_{L^2(-1,1)} = \partial^3 w(1) - \partial^3 w(-1)$. So, we know that
  $w\in H^4_{per}(-1,1)$.
  
  Using integration by parts (which does not introduce boundary terms since
  $u - \Pi^{per}_{p,\bm{\tau}} u \in H^2_{per}(-1,1)$ and $w\in H^4_{per}(-1,1)$)
  and using Theorem~\ref{theo:espen4}, we obtain
  \begin{align*}
    \|u - \Pi^{per}_{p,\bm{\tau}} u\|^2_{L^2} &= \frac{(u - \Pi^{per}_{p,\bm{\tau}} u, u - \Pi^{per}_{p,\bm{\tau}} u)_{L^2}}{\|u - \Pi^{per}_{p,\bm{\tau}} u\|_{L^2}}
    =\frac{(u - \Pi^{per}_{p,\bm{\tau}} u, \partial^4 w)_{L^2}}{\|\partial^4 w\|_{L^2}}\\
    &=\frac{(\partial^2(u - \Pi^{per}_{p,\bm{\tau}} u), \partial^2 w)_{L^2}}{\|\partial^4 w\|_{L^2}} \leq \frac{\hmax_{\bm{\tau}}^2}{\pi^2} \frac{(\partial^2(u - \Pi^{per}_{p,\bm{\tau}} u), \partial^2 w)_{L^2}}{\|\partial^2 (w - \Pi^{per}_{p,\bm{\tau}} w)\|_{L^2}}.
  \end{align*} 
  From the definition of $\Pi^{per}_{p,\bm{\tau}}$, see \eqref{eq:perProj}, we have $(\partial^2(u - \Pi^{per}_{p,\bm{\tau}} u), \partial^2 \Pi^{per}_{p,\bm{\tau}} w)_{L^2} = 0$. This, together with the Cauchy-Schwarz inequality and the $H^2$-stability of $\Pi^{per}_{p,\bm{\tau}}$, gives
\begin{align*} 
  \|u - \Pi^{per}_{p,\bm{\tau}} u\|^2_{L^2} &\leq \frac{\hmax_{\bm{\tau}}^2}{\pi^2} \frac{(\partial^2(u - \Pi^{per}_{p,\bm{\tau}} u), \partial^2(w-\Pi^{per}_{p,\bm{\tau}} w))_{L^2}}{\|\partial^2 (w - \Pi^{per}_{p,\bm{\tau}} w)\|_{L^2}} \\
  &\leq \frac{\hmax_{\bm{\tau}}^2}{\pi^2}\|\partial^2 (u - \Pi^{per}_{p,\bm{\tau}} u)\|^2_{L^2} \leq  \frac{\hmax_{\bm{\tau}}^2}{\pi^2}\|\partial^2 u \|^2_{L^2},
\end{align*}
which completes the proof.
\end{proof}

Let $\Pi^{0}_{p,\bm{\tau}}:H^2(0,1)\cap H^1_0(0,1) \rightarrow S_{p,\bm{\tau}}^0$ be the $H^2$-orthogonal projector satisfying
\begin{align}
    \nonumber
    \inner{\partial^2\Pi^{0}_{p,\bm{\tau}}u}{\partial^2 v}_{L^2(0,1)} &= \inner{\partial^2 u}{\partial^2 v}_{L^2(0,1)} \quad \foralls v\in S_{p,\bm{\tau}}^{0}.
\end{align}

\begin{theorem}
  \label{theo:Pi0}
  For any $p \geq 3$, we have
  \begin{equation}
  	\nonumber
    \|u-\Pi^0_{p,\bm{\tau}} u \|_{L^2(0,1)} \leq \frac{\hmax_{\bm{\tau}}^2}{\pi^2}  \|\partial^2 u \|_{L^2(0,1)} \quad \foralls u \in H^2(0,1)\cap H^1_0(0,1).
  \end{equation}
\end{theorem}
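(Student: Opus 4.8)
The plan is to reduce the statement about the Dirichlet-type space $S^0_{p,\bm\tau}$ on $(0,1)$ to the already-proven periodic estimate of Theorem~\ref{theo:uniPerL2H2} by an odd-reflection argument. Given $u\in H^2(0,1)\cap H^1_0(0,1)$, extend it to $\widetilde u$ on $(-1,1)$ by odd reflection about $0$, i.e.\ $\widetilde u(x) := -u(-x)$ for $x\in(-1,0)$. Because $u(0)=0$ (the $H^1_0$ condition gives vanishing trace at the endpoints), the odd extension is continuous at $0$; moreover $\widetilde u$ inherits enough regularity that $\widetilde u\in H^2(-1,1)$, and one checks the periodicity conditions at $\pm1$: since $u\in H^1_0(0,1)$ we have $u(1)=0$, so $\widetilde u(1)=0=\widetilde u(-1)$, and $\partial\widetilde u$ is even, so $\partial\widetilde u(1)=\partial u(1)=\partial u(-1)\cdot(\text{sign})$... here one must be a little careful, but the upshot is that $\widetilde u\in H^2_{per}(-1,1)$ because the only constraints for $q=2$ are matching of $v$ and $\partial v$ at $\pm1$, and both hold for the odd extension of a function vanishing at both endpoints of $(0,1)$.

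The second ingredient is the compatibility of the projectors under this reflection: I would show that $\Pi^{per}_{p,\bm\tau}\widetilde u$ is itself odd, hence its restriction to $(0,1)$ lies in $S^0_{p,\bm\tau}$, and in fact equals $\Pi^0_{p,\bm\tau}u$. Oddness of $\Pi^{per}_{p,\bm\tau}\widetilde u$ follows from uniqueness of the $H^2$-orthogonal projection together with the symmetry of the construction: the knot vector $\bm\tau^{per}$ is symmetric about $0$, so the reflection operator $R$ maps $S^{per}_{p,\bm\tau}$ to itself and commutes (up to sign) with the defining relations~\eqref{eq:perProj}; since $\widetilde u$ is odd and the mean-value side condition in~\eqref{eq:perProj} is automatically satisfied ($\int_{-1}^1 \widetilde u = 0$ and correspondingly for the projection), $-R\,\Pi^{per}_{p,\bm\tau}\widetilde u$ satisfies the same variational characterisation as $\Pi^{per}_{p,\bm\tau}\widetilde u$, forcing $\Pi^{per}_{p,\bm\tau}\widetilde u$ to be odd. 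Restricting an odd periodic spline with vanishing even derivatives... — more precisely, an odd function automatically has vanishing even-order derivatives at $0$, so the restriction lands in $S^0_{p,\bm\tau}$; and testing~\eqref{eq:perProj} against odd test functions, which after restriction are exactly the elements of $S^0_{p,\bm\tau}$ (extended oddly), shows the restriction satisfies the defining relation of $\Pi^0_{p,\bm\tau}u$, so by uniqueness they coincide.

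With these two facts in hand the estimate is immediate: using that all the relevant $L^2$ and $H^2$ seminorms on $(-1,1)$ of odd functions are exactly twice those on $(0,1)$, and that $h_{\bm\tau^{per}} = h_{\bm\tau}$,
\[
\|u-\Pi^0_{p,\bm\tau}u\|^2_{L^2(0,1)} = \tfrac12\|\widetilde u - \Pi^{per}_{p,\bm\tau}\widetilde u\|^2_{L^2(-1,1)} \le \tfrac12\,\frac{h_{\bm\tau}^4}{\pi^4}\|\partial^2\widetilde u\|^2_{L^2(-1,1)} = \frac{h_{\bm\tau}^4}{\pi^4}\|\partial^2 u\|^2_{L^2(0,1)},
\]
and taking square roots gives the claim. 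The main obstacle I anticipate is not the final inequality but the careful bookkeeping in the reflection step: verifying that the odd extension genuinely lands in $H^2_{per}(-1,1)$ (in particular that no spurious jump in the first derivative is created at $\pm1$, which is where the $H^1_0$ hypothesis on $(0,1)$ is essential), and rigorously justifying that $\Pi^{per}_{p,\bm\tau}$ commutes with odd reflection — this relies on the symmetry of $\bm\tau^{per}$ about the origin and on the fact that the mean-value constraint in~\eqref{eq:perProj} is vacuous for odd data, so one should state these observations explicitly before invoking uniqueness of the projector.
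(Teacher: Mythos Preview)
Your proposal is correct and follows essentially the same route as the paper: odd extension of $u$ to $(-1,1)$, application of Theorem~\ref{theo:uniPerL2H2} on the periodic space, verification that $\Pi^{per}_{p,\bm\tau}\widetilde u$ is odd (the paper says ``anti-symmetric'') so that its restriction lies in $S^0_{p,\bm\tau}$, and identification of this restriction with $\Pi^0_{p,\bm\tau}u$ via the variational characterisation. One small point to tighten: when you argue that the restriction lands in $S^0_{p,\bm\tau}$, oddness alone only gives vanishing even derivatives at $0$; for the endpoint $x=1$ you need to combine oddness with the periodic boundary conditions of $S^{per}_{p,\bm\tau}$ (so $\partial^{2l}v(1)=\partial^{2l}v(-1)=-\partial^{2l}v(1)$), which you should state explicitly.
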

\begin{proof}
Let $u\in H^2(0,1) \cap H^1_0(0,1)$ be arbitrary but fixed. Define $w$ on $(-1,1)$ to be
    \[
        w(x) := \mbox{sign}(x)\; u( |x| ).
    \]
    Observe that we obtain $w \in H^2_{per}(-1,1)$. From Theorem~\ref{theo:uniPerL2H2}, we have
    \[
        \| (I-\Pi^{per}_{p,\bm{\tau}}) w \|_{L^2(-1,1)} \le c \hmax_{\bm{\tau}}^2 \|\partial^2 w \|_{L^2(-1,1)}.
    \]
    Observe that $\|\partial^2 w \|_{L^2(-1,1)} = 2^{1/2} \|\partial^2 u \|_{L^2(0,1)}$.  Define $w_{\bm{\tau}}:= \Pi^{per}_{p,\bm{\tau}} w$  and let $u_{\bm{\tau}}$ be the restriction of $w_{\bm{\tau}}$ to $(0,1)$. Observe that $w_{\bm{\tau}}$ is anti-symmetric, which implies that $u_{\bm{\tau}}\in S^{0}_{p,\bm{\tau}}$. It follows that $\|w-w_{\bm{\tau}}\|_{L^2(-1,1)} = 2^{1/2} \|u-u_{\bm{\tau}}\|_{L^2(0,1)}$. Using this, we obtain
    \[
        \| u-u_{\bm{\tau}} \|_{L^2(0,1)} \le c \hmax_{\bm{\tau}}^2 \|\partial^2 u \|_{L^2(0,1)}.
    \]
    It remains to show that $u_{\bm{\tau}}$ coincides with $\Pi^{per}_{p,\bm{\tau}} u$, i.e., to show that $u-u_{\bm{\tau}}$ is $H^2$-orthogonal to $S^{0}_{p,\bm{\tau}}$. By definition, this means that we have to show 
    \[
        (\partial^2(u-u_{\bm{\tau}}),\partial^2 \tilde{u}_{\bm{\tau}})_{L^2(0,1)} = 0 \quad \foralls \tilde{u}_{\bm{\tau}} \in S^{0}_{p,\bm{\tau}}.
    \]
    Let $\tilde{w}_{\bm{\tau}} \in S_{p,\bm{\tau}}^{per}$ be $\tilde{w}_{\bm{\tau}} := \mbox{sign}(x) \,\tilde{u}_{\bm{\tau}}( |x| )$ and observe that $ 2(\partial^2(u-u_{\bm{\tau}}),\partial^2\tilde{u}_{\bm{\tau}})_{L^2(0,1)} = (\partial^2(w-w_{\bm{\tau}}),\partial^2\tilde{w}_{\bm{\tau}})_{L^2(0,1)}$, since $u$, $u_{\bm{\tau}}$ and $\tilde{u}_{\bm{\tau}}$ are restrictions of $w$, $w_{\bm{\tau}}$ and $\tilde{w}_{\bm{\tau}}$, respectively. Furthermore, $(\partial^2(w-w_{\bm{\tau}}),\partial^2\tilde{w}_{\bm{\tau}})_{L^2(-1,1)}=0 $ by construction, since $w_{\bm{\tau}}:= \Pi^{per}_{p,\bm{\tau}} w$, which completes the proof.
\end{proof}
Let $Q^{0}_{p,\bm{\tau}}:H^2(0,1)\cap H^1_0(0,1) \rightarrow S_{p,\bm{\tau}}^0$ be the $L^2$-orthogonal projector satisfying
\begin{align}
    \nonumber
    \inner{Q^{0}_{p,\bm{\tau}}u}{ v}_{L^2(0,1)} &= \inner{ u}{ v}_{L^2(0,1)} \quad \foralls v\in S_{p,\bm{\tau}}^{0}.
\end{align}

Since the $L^2$-orthogonal projector minimizes the error in the $L^2$-norm,
Theorem~\ref{theo:Pi0} immediately implies the following statement.
\begin{theorem}
  \label{theo:Q}
  For any $p \geq 3$, we have
  \begin{equation}
  \nonumber
    \|u-Q^{0}_{p,\bm{\tau}} u \|_{L^2(0,1)} \leq \frac{\hmax_{\bm{\tau}}^2}{\pi^2}  \|\partial^2 u \|_{L^2(0,1)} \quad \foralls u \in H^2(0,1) \cap H^1_0(0,1).
  \end{equation}
\end{theorem}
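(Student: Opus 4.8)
The plan is to derive Theorem~\ref{theo:Q} from Theorem~\ref{theo:Pi0} by exploiting the defining optimality of the $L^2$-orthogonal projector. Fix an arbitrary $u \in H^2(0,1)\cap H^1_0(0,1)$. Both $\Pi^0_{p,\bm{\tau}} u$ and $Q^0_{p,\bm{\tau}} u$ lie in the same subspace $S^0_{p,\bm{\tau}}$, and by its very definition $Q^0_{p,\bm{\tau}} u$ is the element of $S^0_{p,\bm{\tau}}$ that is closest to $u$ in the $L^2(0,1)$-norm. Hence, in particular,
\[
    \|u - Q^0_{p,\bm{\tau}} u\|_{L^2(0,1)} \le \|u - w_{\bm{\tau}}\|_{L^2(0,1)} \quad \foralls w_{\bm{\tau}} \in S^0_{p,\bm{\tau}}.
\]
Choosing the competitor $w_{\bm{\tau}} = \Pi^0_{p,\bm{\tau}} u \in S^0_{p,\bm{\tau}}$ gives
\[
    \|u - Q^0_{p,\bm{\tau}} u\|_{L^2(0,1)} \le \|u - \Pi^0_{p,\bm{\tau}} u\|_{L^2(0,1)},
\]
and then Theorem~\ref{theo:Pi0} bounds the right-hand side by $\tfrac{h_{\bm{\tau}}^2}{\pi^2}\,\|\partial^2 u\|_{L^2(0,1)}$, which is exactly the claimed estimate.

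The only small point worth spelling out is why $Q^0_{p,\bm{\tau}} u$ really is the $L^2$-best approximation: the relation $(Q^0_{p,\bm{\tau}} u, v)_{L^2(0,1)} = (u,v)_{L^2(0,1)}$ for all $v \in S^0_{p,\bm{\tau}}$ says that $u - Q^0_{p,\bm{\tau}} u$ is $L^2$-orthogonal to the finite-dimensional subspace $S^0_{p,\bm{\tau}}$, so for any $w_{\bm{\tau}} \in S^0_{p,\bm{\tau}}$ the Pythagorean identity $\|u - w_{\bm{\tau}}\|_{L^2}^2 = \|u - Q^0_{p,\bm{\tau}} u\|_{L^2}^2 + \|Q^0_{p,\bm{\tau}} u - w_{\bm{\tau}}\|_{L^2}^2$ yields the optimality. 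This is entirely standard, which is presumably why the excerpt says the result follows "immediately." There is essentially no obstacle here; the statement is a one-line corollary, and the only thing to be careful about is that $\Pi^0_{p,\bm{\tau}} u$ maps into the same space $S^0_{p,\bm{\tau}}$ as $Q^0_{p,\bm{\tau}}$, so that it is an admissible competitor — which it is, by the codomains stated in their respective definitions.
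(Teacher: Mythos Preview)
Your proof is correct and matches the paper's reasoning exactly: the paper simply remarks that the $L^2$-orthogonal projector minimizes the $L^2$-error, so the bound from Theorem~\ref{theo:Pi0} transfers immediately. Your additional justification of the best-approximation property via Pythagoras is standard and fine.
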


Next, we show the stability of $Q_{p,\bm{\tau}}^0$ with respect to the $H^2$-seminorm. Such a proof is possible since the space $S_{p,\bm{\tau}}^0$ satisfies the following $p$-robust inverse inequality, while the space $S_{p,\bm{\tau}}\cap H^1_0(0,1)$ does not satisfy such an inverse inequality, cf.~\cite{takacs2016approximation}.
\begin{theorem}
  \label{theo:BiInv2}
  Let $p\in \mathbb{N}$ with $p\geq 2$. We have
  \begin{equation}
   \nonumber
  \|\partial^2 u_{\bm{\tau}}\|_{L^2(0,1)} \leq 12 h^{-2}_{\bm{\tau},\mathrm{min}} \|u_{\bm{\tau}}\|_{L^2(0,1)} \quad \forall u_{\bm{\tau}} \in S^{0}_{p,\bm{\tau}}.
  \end{equation}
\end{theorem}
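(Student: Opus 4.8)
\emph{Plan.} The plan is to transfer the inequality to a periodic spline space — where differentiation acts cleanly — and then to derive the second‑order bound by iterating a first‑order $p$‑robust inverse inequality. Concretely, given $u_{\bm{\tau}}\in S^{0}_{p,\bm{\tau}}$, I would use the odd reflection $w(x):=\mathrm{sign}(x)\,u_{\bm{\tau}}(|x|)$ on $(-1,1)$, as already employed in the proof of Theorem~\ref{theo:Pi0}. Computing one‑sided derivatives at $x=0$ shows that the conditions for $w\in C^{p-1}$ there are exactly $\partial^{2l}u_{\bm{\tau}}(0)=0$ for $2l<p$, i.e.\ the defining conditions of $S^{0}_{p,\bm{\tau}}$ at the left endpoint; similarly the periodicity conditions $\partial^{l}w(-1)=\partial^{l}w(1)$, $l<p$, reduce to $\partial^{2l}u_{\bm{\tau}}(1)=0$ for $2l<p$. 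Hence $w\in S_{p,\bm{\tau}}^{per}$. Since the knot spans of $\bm{\tau}^{per}$ are exactly those of $\bm{\tau}$ (each appearing twice), $h_{\bm{\tau}^{per},\mathrm{min}}=h_{\bm{\tau},\mathrm{min}}$; and since $w$ and $\partial^{2}w$ are odd, $\|w\|^{2}_{L^{2}(-1,1)}=2\|u_{\bm{\tau}}\|^{2}_{L^{2}(0,1)}$ and $\|\partial^{2}w\|^{2}_{L^{2}(-1,1)}=2\|\partial^{2}u_{\bm{\tau}}\|^{2}_{L^{2}(0,1)}$. The factors $2$ cancel, so it suffices to show
\[
\|\partial^{2}w\|_{L^{2}(-1,1)}\le 12\,h_{\bm{\tau}^{per},\mathrm{min}}^{-2}\,\|w\|_{L^{2}(-1,1)}\qquad\text{for all }w\in S_{p,\bm{\tau}}^{per}.
\]

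Next I would exploit that differentiation maps $S_{q,\bm{\tau}}^{per}$ into $S_{q-1,\bm{\tau}}^{per}$ for every $q\ge 1$: degree and smoothness each drop by one, and periodic matching of the first $q$ derivatives of $v$ forces periodic matching of the first $q-1$ derivatives of $\partial v$. Together with the first‑order $p$‑robust inverse inequality for periodic maximum‑smoothness splines from \cite{takacs2016approximation},
\[
\|\partial v\|_{L^{2}(-1,1)}\le \sqrt{12}\,h_{\bm{\tau},\mathrm{min}}^{-1}\,\|v\|_{L^{2}(-1,1)}\qquad\text{for all }v\in S_{q,\bm{\tau}}^{per},\ q\ge 1,
\]
one applies this bound first to $\partial w\in S_{p-1,\bm{\tau}}^{per}$ (writing $\partial^{2}w=\partial(\partial w)$) and then to $w\in S_{p,\bm{\tau}}^{per}$, and multiplies the two estimates; this yields exactly the constant $12=(\sqrt{12})^{2}$. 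For $p=2$ the two invocations use $q=1$ and $q=2$, consistent with the hypothesis $p\ge 2$. As a sanity check, for $q=1$ the first‑order bound is just the elementary element‑wise estimate $\int_{I}(\partial v)^{2}\le 12\,|I|^{-2}\int_{I}v^{2}$, which follows from $4(a^{2}+ab+b^{2})-(a-b)^{2}=3(a+b)^{2}\ge 0$ with $a,b$ the nodal values of the affine piece $v|_{I}$.

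The only non‑routine ingredient is the first‑order inverse inequality on $S_{q,\bm{\tau}}^{per}$ for $q\ge 2$; this is the main obstacle, and it is precisely the $p$‑robust estimate of \cite{takacs2016approximation}. Its essence is that the maximum‑smoothness and periodicity structure cannot be dispensed with — a purely element‑wise Markov inequality would produce a constant growing like $q^{2}$, and, as noted just before the statement, the analogous bound genuinely fails on $S_{p,\bm{\tau}}\cap H^{1}_{0}(0,1)$. Everything else in the argument is bookkeeping with the odd reflection and with the action of $\partial$ on spline spaces; in particular, if one has a second‑order inverse inequality on periodic splines with constant $12$ directly at hand, the reduction in the first paragraph finishes the proof at once.
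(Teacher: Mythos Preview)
Your argument is correct, and it is exactly the route taken in \cite[Theorem~12]{sogn2019robust}, which is all the paper invokes here (it gives no independent proof). The odd reflection identifies $S^{0}_{p,\bm\tau}$ with the odd part of $S_{p,\bm\tau}^{per}$, and two applications of the first-order periodic inverse estimate $\|\partial v\|_{L^2}\le 2\sqrt{3}\,h_{\bm\tau,\mathrm{min}}^{-1}\|v\|_{L^2}$ (once for $q=p$, once for $q=p-1$) produce the constant $12$ on the nose.

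One small point on the attribution: the results in \cite{takacs2016approximation} are developed for uniform knot vectors, so if you want the first-order periodic inverse inequality on an arbitrary grid you should either point to \cite{sogn2019robust} directly or insert the short induction. For $q\ge 2$ and $v\in S_{q,\bm\tau}^{per}$, periodic integration by parts gives
\[
\|\partial v\|_{L^{2}}^{2}=-(\partial^{2}v,v)_{L^{2}}\le\|\partial^{2}v\|_{L^{2}}\,\|v\|_{L^{2}}\le 2\sqrt{3}\,h_{\bm\tau,\mathrm{min}}^{-1}\,\|\partial v\|_{L^{2}}\,\|v\|_{L^{2}},
\]
the last step using the inductive hypothesis on $\partial v\in S_{q-1,\bm\tau}^{per}$; your elementwise computation is exactly the base case $q=1$. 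This is a citation adjustment, not a mathematical gap.
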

A proof can be found in \cite[Theorem 12]{sogn2019robust}.
\begin{theorem}
  \label{theo:QHstab1d}
  Let $p\in \mathbb{N}$ with $p\geq 3$. Then there exists a constant $c>0$ such that
    \begin{align*}
    \|\partial^2(Q_{p,\bm{\tau}}^0 u) \|^2_{L^2(0,1)} \leq c\|\partial^2 u\|^2_{L^2(0,1)}
    \quad \forall u \in H^2(0,1)\cap H^1_{0}(0,1).
    \end{align*}
\end{theorem}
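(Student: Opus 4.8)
The plan is to prove $H^2$-seminorm stability of the $L^2$-orthogonal projector $Q^0_{p,\bm\tau}$ by the standard "inverse inequality plus approximation property" argument, exploiting the $p$-robust inverse inequality available on $S^0_{p,\bm\tau}$ (Theorem~\ref{theo:BiInv2}) and the $p$-robust $L^2$--$H^2$ approximation estimate for $\Pi^0_{p,\bm\tau}$ (Theorem~\ref{theo:Pi0}). First I would add and subtract $\Pi^0_{p,\bm\tau}u$ inside $\|\partial^2(Q^0_{p,\bm\tau}u)\|_{L^2(0,1)}$, writing $Q^0_{p,\bm\tau}u = \Pi^0_{p,\bm\tau}u + (Q^0_{p,\bm\tau}u - \Pi^0_{p,\bm\tau}u)$. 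By the triangle inequality and the $H^2$-stability of $\Pi^0_{p,\bm\tau}$ (which is $H^2$-orthogonal, hence an $H^2$-seminorm contraction), the first term is bounded by $\|\partial^2 u\|_{L^2(0,1)}$, so it remains to control the difference $e_{\bm\tau} := Q^0_{p,\bm\tau}u - \Pi^0_{p,\bm\tau}u \in S^0_{p,\bm\tau}$.

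Since $e_{\bm\tau}$ lies in $S^0_{p,\bm\tau}$, Theorem~\ref{theo:BiInv2} gives $\|\partial^2 e_{\bm\tau}\|_{L^2(0,1)} \le 12\, h_{\bm\tau,\mathrm{min}}^{-2}\|e_{\bm\tau}\|_{L^2(0,1)}$. Now I would estimate $\|e_{\bm\tau}\|_{L^2(0,1)}$ by the triangle inequality through $u$: $\|e_{\bm\tau}\|_{L^2} \le \|u - Q^0_{p,\bm\tau}u\|_{L^2} + \|u - \Pi^0_{p,\bm\tau}u\|_{L^2}$. By Theorem~\ref{theo:Q} (or equivalently the optimality of $Q^0_{p,\bm\tau}$ combined with Theorem~\ref{theo:Pi0}) the first term is at most $\pi^{-2} h_{\bm\tau}^2 \|\partial^2 u\|_{L^2}$, and by Theorem~\ref{theo:Pi0} the second is also at most $\pi^{-2} h_{\bm\tau}^2 \|\partial^2 u\|_{L^2}$. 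Combining, $\|\partial^2 e_{\bm\tau}\|_{L^2} \le 12\, h_{\bm\tau,\mathrm{min}}^{-2} \cdot \tfrac{2}{\pi^2} h_{\bm\tau}^2 \|\partial^2 u\|_{L^2}$, and the quasi-uniformity assumption~\eqref{eq:quasiuniform}, $h_{\bm\tau} \le c_q h_{\bm\tau,\mathrm{min}}$, turns $h_{\bm\tau}^2 / h_{\bm\tau,\mathrm{min}}^2$ into the constant $c_q^2$. Hence $\|\partial^2 e_{\bm\tau}\|_{L^2} \le c\,\|\partial^2 u\|_{L^2}$ and the triangle inequality from the first paragraph closes the estimate.

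The one point that needs a little care is the claim that $\Pi^0_{p,\bm\tau}$ is $H^2$-seminorm stable: this follows because $\partial^2\Pi^0_{p,\bm\tau}u$ is the $L^2(0,1)$-orthogonal projection of $\partial^2 u$ onto $\partial^2 S^0_{p,\bm\tau}$ (by the defining relation of $\Pi^0_{p,\bm\tau}$), so $\|\partial^2\Pi^0_{p,\bm\tau}u\|_{L^2} \le \|\partial^2 u\|_{L^2}$ directly — no separate argument is needed. I do not expect a serious obstacle here; the only genuine ingredient beyond routine bookkeeping is that the inverse inequality of Theorem~\ref{theo:BiInv2} is $p$-robust and is stated on $S^0_{p,\bm\tau}$ (not on $S_{p,\bm\tau}\cap H^1_0(0,1)$), which is precisely why the statement is restricted to the space with vanishing even derivatives. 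The resulting constant $c$ depends only on $c_q$ (through $c_q^2$) and absolute numbers, consistent with Notation~\ref{notation:c}.
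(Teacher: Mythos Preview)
Your proposal is correct and follows essentially the same approach as the paper's proof: split $Q^0_{p,\bm\tau}u$ as $\Pi^0_{p,\bm\tau}u + (Q^0_{p,\bm\tau}u-\Pi^0_{p,\bm\tau}u)$, control the first part by $H^2$-stability of $\Pi^0_{p,\bm\tau}$, and control the second by the inverse inequality (Theorem~\ref{theo:BiInv2}) together with Theorems~\ref{theo:Pi0} and~\ref{theo:Q} and quasi-uniformity. The only cosmetic difference is that the paper works with squared norms (using $(a+b)^2\le 2a^2+2b^2$) whereas you use the non-squared triangle inequality; your explicit remark on why $\Pi^0_{p,\bm\tau}$ is an $H^2$-seminorm contraction is a helpful addition that the paper leaves implicit.
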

\begin{proof}
  The proof is analogous to that of \cite[Theorem 14]{sogn2019robust}, however it is given here for completeness.
  Using the triangle inequality and the inverse inequality, we obtain
  \begin{align*}
  		\|\partial^2 Q_{p,\bm{\tau}}^0 u \|^2_{L^2}
  		& \le
  		2\|\partial^2 \Pi_{p,\bm{\tau}}^0 u \|^2_{L^2}
  		+
  		2\|\partial^2 (Q_{p,\bm{\tau}}^0u - \Pi_{p,\bm{\tau}}^0 u) \|^2_{L^2}
  		\\&\le
  		2\|\partial^2 \Pi_{p,\bm{\tau}}^0 u \|^2_{L^2}
  		+
  		c h_{\bm{\tau},\mathrm{min}}^{-2}
  		\|Q_{p,\bm{\tau}}^0 u - \Pi_{p,\bm{\tau}}^0 u \|^2_{L^2}
  		\\
  		&
  		\le
  		2\|\partial^2 \Pi_{p,\bm{\tau}}^0 u \|^2_{L^2}
  		+
  		c h_{\bm{\tau},\mathrm{min}}^{-2}
  		\|u - \Pi_{p,\bm{\tau}}^0 u \|^2_{L^2}
  		+
  		c h_{\bm{\tau},\mathrm{min}}^{-2}
  		\|u-Q_{p,\bm{\tau}}^0 u \|^2_{L^2}.
  \end{align*}
  The Theorems~\ref{theo:Pi0} and~\ref{theo:Q} and
  Assumption~\eqref{eq:quasiuniform} 
  give the desired result.
\end{proof}

\subsection{Proof of the approximation properties}
\label{subsec:4:3}

In this subsection, we consider the discretization framework from
Section~\ref{sec:prelims}. We choose
\[
		X_\ell := \mathcal{B}_\ell + (\beta + \hmaxh{\ell}^{-4})\mathcal{M}_\ell,
\]
which corresponds to the norm $\|\cdot\|_{X_\ell}$ that satisfies
\[
  \|u\|^2_{X_\ell} = \|u \|^2_{\mathcal{B}} + (\beta + \hmaxh{\ell}^{-4})\|u\|^2_{L^2(\Omega)}
  \quad \foralls u\in V.
\]

Now, we give a bound for the eigenvalues of $X^{-1}_\ell\mathcal{A}_\ell$.
\begin{lemma}
  \label{lem:eigenvalue}
  Let $\lambda_{\ell}$ with $\ell\geq 1$ be the largest eigenvalue of $X_{\ell}^{-1}\mathcal{A}_{\ell}$. For $p\geq 3$, we have $\lambda_{\ell}\in (\frac{1}{1+c},1)$ for some positive constant $c$. 
\end{lemma}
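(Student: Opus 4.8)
The plan is to recall that $\lambda_\ell$ is the largest eigenvalue of the generalized eigenvalue problem $\mathcal{A}_\ell \underline v = \lambda X_\ell \underline v$, which by the Rayleigh quotient characterization equals
\[
\lambda_\ell = \max_{0\neq u_\ell\in V_\ell}
\frac{\|u_\ell\|^2_{\mathcal{A}_\ell}}{\|u_\ell\|^2_{X_\ell}}
= \max_{0\neq u_\ell\in V_\ell}
\frac{\beta\|u_\ell\|^2_{L^2(\Omega)}+\|u_\ell\|^2_{\mathcal{B}}}
{(\beta+\hmaxh{\ell}^{-4})\|u_\ell\|^2_{L^2(\Omega)}+\|u_\ell\|^2_{\mathcal{B}}}.
\]
First I would establish the upper bound $\lambda_\ell<1$: since $\hmaxh{\ell}^{-4}>0$ and $\|u_\ell\|^2_{L^2(\Omega)}>0$ for $u_\ell\neq0$, the denominator strictly exceeds the numerator, so every Rayleigh quotient is strictly less than $1$; as $V_\ell$ is finite-dimensional the maximum is attained and is therefore $<1$. (Here one notes $\mathcal{A}_\ell$ and $X_\ell$ differ only in the coefficient of $\mathcal{M}_\ell$, with the $X_\ell$-coefficient larger by exactly $\hmaxh{\ell}^{-4}$.)

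For the lower bound $\lambda_\ell>\tfrac{1}{1+c}$, the key is an inverse-type inequality showing that on $V_\ell$ the mass term is controlled by the biharmonic term up to the factor $\hmaxh{\ell}^{-4}$, namely
\[
\hmaxh{\ell}^{-4}\|u_\ell\|^2_{L^2(\Omega)} \le c\,\|u_\ell\|^2_{\mathcal{B}}
\quad\foralls u_\ell\in V_\ell.
\]
Granting this, pick any fixed nonzero $u_\ell$; the Rayleigh quotient is bounded below by
\[
\frac{\|u_\ell\|^2_{\mathcal{B}}}{(\beta+\hmaxh{\ell}^{-4})\|u_\ell\|^2_{L^2(\Omega)}+\|u_\ell\|^2_{\mathcal{B}}}
\ge \frac{\|u_\ell\|^2_{\mathcal{B}}}{\beta\|u_\ell\|^2_{L^2(\Omega)}+(1+c)\|u_\ell\|^2_{\mathcal{B}}},
\]
which is not yet uniform in $\beta$ unless one also bounds $\beta\|u_\ell\|^2_{L^2(\Omega)}$. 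To handle arbitrary $\beta\ge0$ robustly, I would instead argue directly: write $a=\|u_\ell\|^2_{\mathcal{B}}$, $b=\|u_\ell\|^2_{L^2(\Omega)}$ and observe that the quotient $\frac{\beta b+a}{(\beta+\hmaxh{\ell}^{-4})b+a}$ is monotone in $\beta$, so its infimum over $\beta\ge0$ is attained at $\beta=0$, giving $\lambda_\ell\ge\frac{a}{\hmaxh{\ell}^{-4}b+a}\ge\frac{1}{1+c}$ by the inverse inequality. Thus the $\beta$-dependence drops out entirely.

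The main obstacle is therefore proving the inverse inequality $\hmaxh{\ell}^{-4}\|u_\ell\|^2_{L^2(\Omega)}\le c\,\|u_\ell\|^2_{\mathcal{B}}$ on $V_\ell$ with a constant robust in $p$ and $h$. I would reduce it to the parameter domain via the norm equivalences~\eqref{eq:geoEquiv}, then to the simplified bilinear form via Lemma~\ref{lemma:equivB}, so it suffices to show $\hmaxh{\ell}^{-4}\|\widehat u_\ell\|^2_{L^2(\widehat\Omega)}\le c\,\|\widehat u_\ell\|^2_{\bar{\mathcal{B}}}$ for $\widehat u_\ell\in\widehat V_\ell$. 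On the tensor-product space this separates: writing $\widehat u_\ell$ as a sum of tensor products of univariate splines in $S^{0}_{p,\bm\tau_{\ell,i}}$ (the reflection argument identifying $H^1_0$ splines with the space $S^0$ of~\eqref{eq:defSEV} is the point that makes Theorem~\ref{theo:BiInv2} applicable), the univariate inverse inequality $\|\partial^2 v\|_{L^2}\le 12\,h_{\bm\tau,\mathrm{min}}^{-2}\|v\|_{L^2}$ of Theorem~\ref{theo:BiInv2}, combined with quasi-uniformity~\eqref{eq:quasiuniform} to replace $h_{\ell,\mathrm{min}}$ by $h_\ell$, yields the claim with an explicit, $p$-independent constant. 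A small bookkeeping point is that one must control $\|\partial_{x_kx_k}\widehat u_\ell\|^2_{L^2}$ for each $k$ using the inverse inequality in the $k$-th direction only and stability (the $L^2$-norm being a product over the other directions), which is routine once the tensor structure is exploited.
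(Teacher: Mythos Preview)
Your upper bound $\lambda_\ell<1$ is fine, and your monotonicity-in-$\beta$ observation is correct and useful.  The lower-bound argument, however, has a genuine gap.

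The inequality you declare to be ``the key'',
\[
\hmaxh{\ell}^{-4}\|u_\ell\|^2_{L^2(\Omega)} \le c\,\|u_\ell\|^2_{\mathcal{B}}
\qquad\foralls u_\ell\in V_\ell,
\]
is false.  Take any fixed nonzero $u_0\in V_0\subset V_\ell$; its $L^2$- and $\mathcal B$-norms are fixed constants, while $\hmaxh{\ell}^{-4}\to\infty$.  No $p$-robust constant can save this.  In fact you only need this bound for \emph{one} well-chosen $u_\ell$, since $\lambda_\ell$ is a maximum of the Rayleigh quotient; but your write-up asserts and then uses the universal version.

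More seriously, the tool you invoke to prove the inequality is the wrong way round.  Theorem~\ref{theo:BiInv2} says
$\|\partial^2 v\|_{L^2}\le 12\,h_{\bm\tau,\mathrm{min}}^{-2}\|v\|_{L^2}$,
i.e., $\|v\|^2_{\mathcal B}\le c\,h^{-4}\|v\|^2_{L^2}$, which is exactly the \emph{opposite} direction to what you need.  An inverse (Bernstein) inequality bounds derivatives by function values; what you want is a Jackson-type bound $\|v\|_{L^2}\le c\,h^{2}\|\partial^2 v\|_{L^2}$, and that can only hold on a ``high-frequency'' subspace, not on all of $V_\ell$.  (Also, the side remark that the reflection argument identifies $S_{p,\bm\tau}\cap H^1_0$ with $S^0_{p,\bm\tau}$ is incorrect: $S^0_{p,\bm\tau}$ is a strict subspace, which is why the smoother needs the splitting $S^0\oplus S^1$.)

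The paper's fix is short: since $\lambda_\ell$ is a maximum, it suffices to exhibit a single $w_\ell\in V_\ell$ with $\hmaxh{\ell}^{-4}\|w_\ell\|^2_{L^2}\le c\|w_\ell\|^2_{\mathcal B}$.  Choose $w_\ell\in V_\ell$ nonzero and $L^2$-orthogonal to $V_{\ell-1}$ (possible because $V_{\ell-1}\subsetneqq V_\ell$).  Then
\[
\|w_\ell\|_{L^2(\Omega)}=\inf_{v\in V_{\ell-1}}\|w_\ell-v\|_{L^2(\Omega)}
\le c\,\hmaxh{\ell-1}^{2}\|w_\ell\|_{\mathcal B}
\]
by the approximation estimate (Theorem~\ref{theo:QHapp} together with~\eqref{eq:geoEquiv}), and the grid-ratio assumption~\eqref{eq:assGrids} converts $\hmaxh{\ell-1}$ into $\hmaxh{\ell}$.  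Plugging this $w_\ell$ into the Rayleigh quotient (your monotonicity argument handles the $\beta$-dependence) gives $\lambda_\ell>1/(1+c)$.
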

\begin{proof}
Since $\mathcal{M}_\ell$ is symmetric positive definite and $h_\ell^{-4}>0$, we have $\mathcal{A}_{\ell}< X_{\ell}$, which implies $\lambda_{\ell}<1$.

For the lower bound, we use $V_{\ell-1} \subsetneqq V_\ell$, which implies
that there is some $w_\ell\in V_\ell$ that is $L_2$-orthogonal to $V_{\ell-1} $, that is $(w_{\ell},u_{\ell-1})_{L^2(\Omega)} = 0$ for all $u_{\ell-1}\in V_{\ell-1}$. By combining Theorem~\ref{theo:QHapp} and~\eqref{eq:geoEquiv}, we
obtain
\[
\|w_{\ell}\|_{L^2(\Omega)} = \sup_{u_{\ell-1}\in V_{\ell-1}} \|w_{\ell}-u_{\ell-1}\|_{L^2(\Omega)} \leq c \, \hmaxh{\ell-1}^2 \|w_{\ell}\|_{\mathcal B}.
\]
In matrix-vector notation, this reads as
\[
		\underline w_{\ell}^T
		\mathcal{M}_{\ell}
		\underline w_{\ell}
		\leq
		c\,
		\hmaxh{\ell-1}^4\,
		\underline w_{\ell}^T \mathcal{B}_{\ell}\underline w_{\ell}.
\]
Using~\eqref{eq:assGrids}, we know that there is a constant $c>0$
such that
\begin{align*}
  \underline{w}_{\ell}^\top X_{\ell}\underline{w}_{\ell}
  &=
  \underline{w}_{\ell}^\top \mathcal A_{\ell} \underline{w}_{\ell}
  +
  \hmaxh{\ell}^{-4}
  \underline{w}_{\ell}^\top \mathcal{M}_{\ell} \underline{w}_{\ell}
  <
  (1+c)
  \underline{w}_{\ell}^\top \mathcal A_{\ell} \underline{w}_{\ell},
\end{align*}
which shows $\lambda_\ell > 1/(1+c)$.
\end{proof}
Next, we prove \eqref{eq:ass:approx1} and~\eqref{eq:ass:approx2}. This requires that we choose the projectors $\mathbf{Q}^0_{p,\ell}$, which have to map into the space $V_\ell$. We first define a projector that maps from $\widehat V$ into $\widehat V_\ell$ by tensorization of the univariate projectors:
\[
	\widehat{\mathbf{Q}}^0_{p,\ell}
	:=
	Q^0_{p,\bm{\tau}_{\ell,1}}
	\otimes \cdots \otimes  
	Q^0_{p,\bm{\tau}_{\ell,d}},
\]
where the tensor product is to be understood as in \cite[Section~3.2]{T:2017MPMG}. The next two theorems follow from Theorems~\ref{theo:Q} and~\ref{theo:QHstab1d} by standard arguments.
\begin{theorem}
  \label{theo:QHapp}
  Let $p\in \mathbb{N}$ with $p\geq 3$. Then there exists a constant $c$ such that
  \[
  \|(I-\widehat{\mathbf{Q}}_{p,\ell}^0) \widehat u \|_{L^2(\widehat\Omega)} \leq c \hmax_{\ell}^2 \| \widehat u \|_{\bar{\mathcal B}} \quad \foralls \widehat u\in H^2(\widehat\Omega) \cap H^1_0(\widehat\Omega).
  \]
\end{theorem}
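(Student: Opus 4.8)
The statement to prove is the multivariate $L^2$–$H^2$ approximation estimate for the tensor-product projector $\widehat{\mathbf{Q}}^0_{p,\ell}$, namely $\|(I-\widehat{\mathbf{Q}}^0_{p,\ell})\widehat u\|_{L^2(\widehat\Omega)} \le c\,\hmax_\ell^2 \|\widehat u\|_{\bar{\mathcal B}}$, as a consequence of the univariate results Theorem~\ref{theo:Q} ($L^2$-error bound for $Q^0_{p,\bm\tau}$) and Theorem~\ref{theo:QHstab1d} ($H^2$-seminorm stability of $Q^0_{p,\bm\tau}$). The approach is the standard telescoping / "one-direction-at-a-time" argument for tensor-product projectors, exactly as in \cite[Section~3.2]{T:2017MPMG}.

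Here is how I would carry it out. First, I would fix the notation: write $\widehat{\mathbf{Q}}^0_{p,\ell} = \mathcal{Q}_1 \cdots \mathcal{Q}_d$, where $\mathcal{Q}_k$ denotes the univariate projector $Q^0_{p,\bm\tau_{\ell,k}}$ acting only on the $x_k$-variable (with the identity in the other directions); these operators commute. Then I would telescope the error: with $\mathcal{Q}_0 := I$,
\[
  I - \widehat{\mathbf{Q}}^0_{p,\ell}
  = \sum_{k=1}^d \big(\mathcal{Q}_0\cdots\mathcal{Q}_{k-1}\big)\big(I - \mathcal{Q}_k\big),
\]
so by the triangle inequality it suffices to bound each term $\|\mathcal{Q}_1\cdots\mathcal{Q}_{k-1}(I-\mathcal{Q}_k)\widehat u\|_{L^2(\widehat\Omega)}$. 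For the factors $\mathcal{Q}_1,\dots,\mathcal{Q}_{k-1}$ I would use that each is an $L^2$-orthogonal projector, hence an $L^2(\widehat\Omega)$-contraction (applying the univariate $L^2$-stability slicewise and integrating over the remaining variables via Fubini). This reduces the $k$-th term to $\|(I-\mathcal{Q}_k)\widehat u\|_{L^2(\widehat\Omega)}$. Applying Theorem~\ref{theo:Q} in the $x_k$-direction slicewise and integrating over the other variables gives
\[
  \|(I-\mathcal{Q}_k)\widehat u\|_{L^2(\widehat\Omega)}
  \le \frac{h_{\bm\tau_{\ell,k}}^2}{\pi^2}\,\|\partial_{x_k}^2 \widehat u\|_{L^2(\widehat\Omega)}
  \le \frac{\hmax_\ell^2}{\pi^2}\,\|\widehat u\|_{\bar{\mathcal B}},
\]
since $h_{\bm\tau_{\ell,k}}\le \hmax_\ell$ and $\|\partial_{x_k}^2\widehat u\|_{L^2(\widehat\Omega)}^2 \le \|\widehat u\|_{\bar{\mathcal B}}^2$ by definition of $\bar{\mathcal B}$. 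Summing over $k=1,\dots,d$ yields the claim with $c = d/\pi^2$ (up to the constant from quasi-uniformity hidden in Theorem~\ref{theo:Q}'s constant, which here is explicit so $c$ really is $d/\pi^2$).

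The only genuine subtlety — and the step I would write most carefully — is the slicewise application of the univariate estimates to a general $\widehat u \in H^2(\widehat\Omega)\cap H^1_0(\widehat\Omega)$: one must check that for almost every choice of the transverse variables the restriction lies in $H^2(0,1)\cap H^1_0(0,1)$ so that Theorems~\ref{theo:Q} and the $L^2$-stability apply, and that the resulting bounds are measurable in the transverse variables so Fubini is legitimate; this is routine but is where the hypothesis $\widehat u\in H^1_0(\widehat\Omega)$ is used (it guarantees the one-dimensional slices satisfy the homogeneous boundary condition, so they land in the domain of $Q^0_{p,\bm\tau}$). Note that $H^2$-stability of the composed projector is \emph{not} actually needed for this particular statement — Theorem~\ref{theo:QHstab1d} is only invoked here indirectly, since each $\mathcal{Q}_k$ is used merely as an $L^2$-contraction; the $H^2$-stability becomes relevant only for the companion stability result (the $H^2$–$H^2$ bound), which the phrase "the next two theorems" signals is coming next. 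I would therefore keep this proof short, essentially stating "by the standard tensor-product telescoping argument, using that each univariate $L^2$-projector is an $L^2$-contraction together with Theorem~\ref{theo:Q} applied in each coordinate direction."
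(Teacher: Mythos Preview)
Your proposal is correct and follows essentially the same approach as the paper: the paper (written for $d=2$, with the remark that the extension is obvious) uses the identical telescoping $I - Q_1\otimes Q_2 = (I - Q_1\otimes I) + (Q_1\otimes I)(I - I\otimes Q_2)$, removes the leading factor by $L^2$-stability of the $L^2$-projector, and then applies Theorem~\ref{theo:Q} in each coordinate direction. Your observation that Theorem~\ref{theo:QHstab1d} is not actually used in this particular proof is accurate and matches the paper's argument; the $H^2$-stability is only invoked in the companion result Theorem~\ref{theo:QHstab}.
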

\begin{proof}
	The proof is given for the two-dimensional case.
	We have by definition and using the triangle inequality
  \begin{align*}
  		\|(I-\widehat{\mathbf{Q}}_{p,\ell}^0) \widehat u \|_{L^2(\widehat\Omega)}
  		&=
  		\|(I-Q_{p,\bm{\tau}_{\ell,1}}^0\otimes
  		Q_{p,\bm{\tau}_{\ell,2}}^0) \widehat u \|_{L^2(\widehat\Omega)}
  		\\&\le
  		\|(I-Q_{p,\bm{\tau}_{\ell,1}}^0\otimes I) \widehat u \|_{L^2(\widehat\Omega)}
  		+
  		\|(Q_{p,\bm{\tau}_{\ell,1}}^0\otimes I)(I-I\otimes
  		Q_{p,\bm{\tau}_{\ell,2}}^0) \widehat u \|_{L^2(\widehat\Omega)}.
  \end{align*}
  Using the $L^2$-stability of the $L^2$-projectors, we further
  obtain
  \begin{align*}
  		\|(I-\widehat{\mathbf{Q}}_{p,\ell}^0) \widehat u \|_{L^2(\widehat\Omega)}
  		&\le
  		\|(I-Q_{p,\bm{\tau}_{\ell,1}}^0\otimes I) \widehat u \|_{L^2(\widehat\Omega)}
  		+
  		\|(I-I\otimes Q_{p,\bm{\tau}_{\ell,2}}^0) \widehat u \|_{L^2(\widehat\Omega)}.
  \end{align*}
  The desired result immediately follows from Theorem~\ref{theo:Q}.
	The extension to more dimensions is obvious.
\end{proof}

\begin{theorem}
  \label{theo:QHstab}
  Let $p\in \mathbb{N}$ with $p\geq 3$. Then there exists a constant $c>0$ such that
    \begin{align*}
    \|\widehat{\bm{Q}}_{p,\ell}^0 \widehat u \|^2_{\bar{\mathcal B}} \leq c\|\widehat u\|^2_{\bar{\mathcal B}}
    \quad \forall \widehat u \in H^2(\widehat \Omega)\cap H^1_{0}(\widehat\Omega).
    \end{align*}
\end{theorem}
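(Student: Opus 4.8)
The plan is to reduce the assertion to the univariate $H^2$-seminorm stability from Theorem~\ref{theo:QHstab1d} by exploiting the tensor-product structure, in complete analogy with the proof of Theorem~\ref{theo:QHapp}. Since $\|\widehat u\|_{\bar{\mathcal B}}^2=\sum_{k=1}^d\|\partial_{x_kx_k}\widehat u\|_{L^2(\widehat\Omega)}^2$, it suffices to bound $\|\partial_{x_jx_j}(\widehat{\bm{Q}}_{p,\ell}^0\widehat u)\|_{L^2(\widehat\Omega)}$ for each fixed direction $j\in\{1,\dots,d\}$ separately. I would carry out the details for $d=2$ and remark, as in Theorem~\ref{theo:QHapp}, that the general case is an obvious induction.

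First I would write $\widehat{\bm{Q}}_{p,\ell}^0$ as the composition of the commuting directional projectors $Q_k:=I\otimes\cdots\otimes Q^0_{p,\bm{\tau}_{\ell,k}}\otimes\cdots\otimes I$ and note that, for $k\ne j$, the operator $\partial_{x_jx_j}$ acts only in the $j$-th coordinate and hence commutes with $Q_k$. Peeling off the factor $Q_j$ and applying Theorem~\ref{theo:QHstab1d} fibrewise in the $x_j$-direction to the function $w:=\bigl(\prod_{k\ne j}Q_k\bigr)\widehat u$ (and integrating over the remaining coordinates) bounds $\|\partial_{x_jx_j}(\widehat{\bm{Q}}_{p,\ell}^0\widehat u)\|_{L^2(\widehat\Omega)}^2$ by $c\,\|\partial_{x_jx_j}w\|_{L^2(\widehat\Omega)}^2$. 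Commuting $\partial_{x_jx_j}$ back past the remaining factors $Q_k$, $k\ne j$, and using that each $Q_k$, being an $L^2$-orthogonal projector, is an $L^2$-contraction, I would further bound this by $c\,\|\partial_{x_jx_j}\widehat u\|_{L^2(\widehat\Omega)}^2$. Summing over $j=1,\dots,d$ then yields the claim.

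The main obstacle --- the same point that is tacitly used in the proof of Theorem~\ref{theo:QHapp} and is handled by the tensor-product calculus of \cite[Section~3.2]{T:2017MPMG} --- is the fibrewise application of the univariate results: one has to verify that the intermediate function $w=\bigl(\prod_{k\ne j}Q_k\bigr)\widehat u$ has, for almost every value of the transverse coordinates, a one-dimensional slice lying in $H^2(0,1)\cap H^1_0(0,1)$, so that $Q^0_{p,\bm{\tau}_{\ell,j}}$ and Theorem~\ref{theo:QHstab1d} apply, and that the $L^2(\widehat\Omega)$-norms genuinely decompose into iterated one-dimensional $L^2$-norms by Fubini. This is ensured by the facts that $\widehat u\in H^2(\widehat\Omega)\cap H^1_0(\widehat\Omega)$ has such slices for a.e.\ transverse coordinate and that each $Q_k$ maps into $S^0_{p,\bm{\tau}_{\ell,k}}\subset H^2\cap H^1_0$ without destroying the regularity in the other directions. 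Apart from this bookkeeping the argument is just a short chain of commutation identities combined with the univariate $H^2$-stability (Theorem~\ref{theo:QHstab1d}) and the trivial $L^2$-stability of orthogonal projectors.
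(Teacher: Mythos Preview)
Your proposal is correct and follows essentially the same approach as the paper: decompose $\|\cdot\|_{\bar{\mathcal B}}^2$ into directional terms, exploit the tensor-product structure to commute $\partial_{x_jx_j}$ past the projectors acting in the other coordinates, use the $L^2$-contractivity of those projectors, and invoke Theorem~\ref{theo:QHstab1d} for the factor in direction~$j$. The only (inessential) difference is the order in which you peel off the factors: the paper first strips the ``other'' projectors via $L^2$-stability and then applies Theorem~\ref{theo:QHstab1d} directly to $\widehat u$, whereas you first apply Theorem~\ref{theo:QHstab1d} to $w=\prod_{k\ne j}Q_k\,\widehat u$ and strip the remaining factors afterwards; both orderings work and require the same fibrewise bookkeeping you describe.
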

\begin{proof}
	The proof is given for the two-dimensional case.
	We have by definition and using the triangle inequality
  \begin{align*}
  		\|\widehat{\mathbf{Q}}_{p,\ell}^0 \widehat u \|_{\bar{\mathcal B}}^2
  		&=
  		\|\partial_{x_1}^2 (Q_{p,\bm{\tau}_{\ell,1}}^0\otimes
  		Q_{p,\bm{\tau}_{\ell,2}}^0) \widehat u \|_{L^2(\widehat\Omega)}^2
  		+
  		\|\partial_{x_2}^2 (Q_{p,\bm{\tau}_{\ell,1}}^0\otimes
  		Q_{p,\bm{\tau}_{\ell,2}}^0) \widehat u \|_{L^2(\widehat\Omega)}^2
  		\\&=
  		\|\partial_{x_1}^2
  		(I\otimes Q_{p,\bm{\tau}_{\ell,2}}^0)
  		(Q_{p,\bm{\tau}_{\ell,1}}^0\otimes I)
  		\widehat u \|_{L^2(\widehat\Omega)}^2
  		+
  		\|\partial_{x_2}^2
  		(Q_{p,\bm{\tau}_{\ell,1}}^0\otimes I)
  		(I\otimes Q_{p,\bm{\tau}_{\ell,2}}^0)
  		\widehat u \|_{L^2(\widehat\Omega)}^2.
  \end{align*}
  Using the $L^2$-stability of the $L^2$-projector, we obtain 
  \begin{align*}
  		\|\widehat{\mathbf{Q}}_{p,\ell}^0 \widehat u \|_{\bar{\mathcal B}}^2
  		&\le
  		\|\partial_{x_1}^2
  		(I\otimes Q_{p,\bm{\tau}_{\ell,1}}^0)
  		\widehat u \|_{L^2(\widehat\Omega)}^2
  		+
  		\|\partial_{x_2}^2
  		(Q_{p,\bm{\tau}_{\ell,2}}^0\otimes I)
  		\widehat u \|_{L^2(\widehat\Omega)}^2.
  \end{align*}
	Using Theorem~\ref{theo:QHstab1d}, we further obtain
	\begin{align*}
  		\|\widehat{\mathbf{Q}}_{p,\ell}^0 \widehat u \|_{\bar{\mathcal B}}^2
  		&\le
  		c\|\partial_{x_1}^2 \widehat u \|_{L^2(\widehat\Omega)}^2
  		+
  		c\|\partial_{x_2}^2 \widehat u \|_{L^2(\widehat\Omega)}^2
  		= c \|\widehat u\|_{\bar{\mathcal B}}^2,
  \end{align*}
	which finishes the proof.
	The extension to more dimensions is obvious.
\end{proof}

The projectors $\mathbf{Q}^0_{p,\ell}$ are now defined 
via the pull-back principle, such that
\begin{equation}\label{def:Qphys}
			\mathbf{Q}^0_{p,\ell} u
			:=
			(\widehat{\mathbf{Q}}^0_{p,\ell} (u \circ \bm{G}))
			\circ \bm{G}^{-1}
			\quad\foralls u\in V.
\end{equation}
Note that, by construction, $\mathbf{Q}^0_{p,\ell}$ maps into a subspace
of $V_\ell$, where all even outer normal derivatives on the boundary
vanish.

\begin{theorem}
    \label{theo:appProof}
  Let $d\in\mathbb{N}$ and $p\in \mathbb{N}$ with $p \ge 3$. For each level $\ell = 0,1,\ldots, L-1$, let $\mathbf{Q}^0_{p,\ell}:H^2(\Omega)\cap H^1_{0}(\Omega)\rightarrow V_\ell$ be the projectors defined in \eqref{def:Qphys}.
  There exists a constants $C_1$ and $C_2$ such that 
  \begin{align}
    \label{eq:As11}
    \|(\mathbf{Q}^0_{p,\ell}-\mathbf{Q}^0_{p,\ell-1})u_L \|^2_{X_\ell} &\leq C_1 \lambda^{-1}_\ell (u_L,u_L)_{\mathcal{A}} \quad &\text{for}\quad& \ell = 1,\ldots,L,\\
    \label{eq:As12}
(\mathbf{Q}^0_{p,\ell}\,u_L,\mathbf{Q}^0_{p,\ell}\,u_L)_{\mathcal{A}} &\leq C_2  (u_L,u_L)_{\mathcal{A}} \quad &\text{for}\quad& \ell = 0,\ldots,L-1,
  \end{align}
  for all $u_L\in V_L$. 
\end{theorem}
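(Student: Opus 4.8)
The plan is to reduce both estimates to the univariate/tensor-product results already established, namely Theorem~\ref{theo:QHapp} ($L^2$-approximation) and Theorem~\ref{theo:QHstab} ($\bar{\mathcal B}$-stability), transferred to the physical domain via the norm equivalences~\eqref{eq:geoEquiv} and Lemma~\ref{lemma:equivB}. Throughout I would work with the pulled-back functions $\widehat u = u\circ\bm G$ and use that, by definition~\eqref{def:Qphys}, $\mathbf Q^0_{p,\ell}$ on $\Omega$ corresponds to $\widehat{\mathbf Q}^0_{p,\ell}$ on $\widehat\Omega$, so that $\|\mathbf Q^0_{p,\ell}u\|_{L^2(\Omega)}$, $\|\mathbf Q^0_{p,\ell}u\|_{\mathcal B}$ are controlled by $\|\widehat{\mathbf Q}^0_{p,\ell}\widehat u\|_{L^2(\widehat\Omega)}$, $\|\widehat{\mathbf Q}^0_{p,\ell}\widehat u\|_{\widehat{\mathcal B}}$ up to the constants $\underline c_M,\overline c_M,\underline c_B,\overline c_B$.

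For~\eqref{eq:As12}, the stability estimate: expand $(\mathbf Q^0_{p,\ell}u_L,\mathbf Q^0_{p,\ell}u_L)_{\mathcal A} = \beta\|\mathbf Q^0_{p,\ell}u_L\|^2_{L^2(\Omega)} + \|\mathbf Q^0_{p,\ell}u_L\|^2_{\mathcal B}$. The mass term is handled by the $L^2$-stability of the tensor-product $L^2$-projector (composition of univariate $L^2$-orthogonal projectors, each of norm $1$), giving $\|\mathbf Q^0_{p,\ell}u_L\|_{L^2(\Omega)}\le c\|u_L\|_{L^2(\Omega)}$; the $\mathcal B$-term is handled by passing to $\widehat{\mathcal B}$, then to $\bar{\mathcal B}$ via Lemma~\ref{lemma:equivB}, applying Theorem~\ref{theo:QHstab}, and passing back. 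Summing gives~\eqref{eq:As12} with $C_2 = \max\{c\,\overline c_M/\underline c_M,\ c\,d\,\overline c_B/\underline c_B\}$ or similar; since $(u_L,u_L)_{\mathcal A}\ge\beta\|u_L\|^2_{L^2}$ and $\ge\|u_L\|^2_{\mathcal B}$, both pieces are absorbed.

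For~\eqref{eq:As11}, the approximation estimate: using $\|\cdot\|^2_{X_\ell} = \|\cdot\|^2_{\mathcal B} + (\beta+h_\ell^{-4})\|\cdot\|^2_{L^2(\Omega)}$, write $(\mathbf Q^0_{p,\ell}-\mathbf Q^0_{p,\ell-1})u_L = (I-\mathbf Q^0_{p,\ell-1})u_L - (I-\mathbf Q^0_{p,\ell})u_L$ and use the triangle inequality. For the $L^2(\Omega)$-contribution I would apply Theorem~\ref{theo:QHapp} at both levels $\ell$ and $\ell-1$ (after pulling back), obtaining a bound $c\,h_{\ell-1}^4\|u_L\|^2_{\bar{\mathcal B}}$, then use~\eqref{eq:assGrids} to replace $h_{\ell-1}$ by $h_\ell$ and Lemma~\ref{lemma:equivB} plus~\eqref{eq:geoEquiv} to replace $\|u_L\|^2_{\bar{\mathcal B}}$ by $c\|u_L\|^2_{\mathcal B}\le c(u_L,u_L)_{\mathcal A}$; multiplying by $(\beta+h_\ell^{-4})$ and noting $\beta h_\ell^4\le c$ (grids bounded, $\beta$ fixed? — no, $\beta$ is a parameter, but $\beta\|u_L\|^2_{L^2}\le (u_L,u_L)_{\mathcal A}$ absorbs the $\beta$-part directly, while the $h_\ell^{-4}$-part cancels the $h_\ell^4$) gives a bound $c(u_L,u_L)_{\mathcal A}$. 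For the $\mathcal B$-contribution of $(I-\mathbf Q^0_{p,\ell})u_L$ and $(I-\mathbf Q^0_{p,\ell-1})u_L$ I would use the triangle inequality together with Theorem~\ref{theo:QHstab} ($\bar{\mathcal B}$-stability of $\widehat{\mathbf Q}^0_{p,\ell}$, hence of $I-\widehat{\mathbf Q}^0_{p,\ell}$) to bound $\|(I-\mathbf Q^0_{p,\ell})u_L\|_{\mathcal B}\le c\|u_L\|_{\mathcal B}\le c(u_L,u_L)_{\mathcal A}^{1/2}$; again via Lemma~\ref{lemma:equivB} and~\eqref{eq:geoEquiv}. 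Finally, since $\lambda_\ell\in(\tfrac1{1+c},1)$ by Lemma~\ref{lem:eigenvalue}, the factor $\lambda_\ell^{-1}$ on the right is harmless — it is bounded below by a constant — so the bound $c(u_L,u_L)_{\mathcal A}$ we obtained is of the required form $C_1\lambda_\ell^{-1}(u_L,u_L)_{\mathcal A}$.

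The main obstacle is the $\mathcal B$-term in~\eqref{eq:As11}: one needs $\|(\mathbf Q^0_{p,\ell}-\mathbf Q^0_{p,\ell-1})u_L\|_{\mathcal B}\le c\|u_L\|_{\mathcal B}$ \emph{without} an extra $h_\ell^{-2}$ blow-up, and this crucially relies on the $\bar{\mathcal B}$-stability of the tensor-product $L^2$-projector onto $S^0_{p,\bm\tau}$ (Theorem~\ref{theo:QHstab}), which in turn rests on the $p$-robust inverse inequality of Theorem~\ref{theo:BiInv2} valid specifically for the space $S^0_{p,\bm\tau}$ with vanishing even derivatives — this is exactly why the projector was designed to map into that space rather than into $S_{p,\bm\tau}\cap H^1_0$. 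The only mild bookkeeping is tracking the geometry constants $c_1,c_2$ through the pull-back and confirming $\beta$-robustness, but as noted the $\beta$-term is absorbed trivially by $(u_L,u_L)_{\mathcal A}\ge\beta\|u_L\|^2_{L^2(\Omega)}$, so no regularity or parameter-dependent estimate is needed.
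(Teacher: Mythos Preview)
Your proposal is correct and uses exactly the same ingredients as the paper (Theorems~\ref{theo:QHapp} and~\ref{theo:QHstab}, the equivalences~\eqref{eq:geoEquiv} and Lemma~\ref{lemma:equivB}, assumption~\eqref{eq:assGrids}, and Lemma~\ref{lem:eigenvalue}); your treatment of~\eqref{eq:As12} is identical to the paper's. The only organizational difference concerns~\eqref{eq:As11}: you decompose $\mathbf Q^0_{p,\ell}-\mathbf Q^0_{p,\ell-1}=(I-\mathbf Q^0_{p,\ell-1})-(I-\mathbf Q^0_{p,\ell})$ and bound both pieces directly, whereas the paper first proves the auxiliary estimate $\|(I-\mathbf Q^0_{p,\ell-1})u_L\|^2_{X_\ell}\le c\,\lambda_\ell^{-1}(u_L,u_L)_{\mathcal A}$ and then uses the projector identity $\mathbf Q^0_{p,\ell-1}\mathbf Q^0_{p,\ell}=\mathbf Q^0_{p,\ell-1}$ (valid because $\widehat{\mathbf Q}^0_{p,\ell}$ is the $L^2(\widehat\Omega)$-orthogonal projection onto a space that contains the range of $\widehat{\mathbf Q}^0_{p,\ell-1}$) to write $\mathbf Q^0_{p,\ell}-\mathbf Q^0_{p,\ell-1}=(I-\mathbf Q^0_{p,\ell-1})\mathbf Q^0_{p,\ell}$ and then invoke the already-proven~\eqref{eq:As12}. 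Both decompositions lead to the same bounds with the same constants; your route is slightly more direct, while the paper's reuses~\eqref{eq:As12} and avoids handling the level-$\ell$ residual $(I-\mathbf Q^0_{p,\ell})u_L$ separately.
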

\begin{proof}
		Let $u_L\in V_L$ arbitrary but fixed and
		let $\widehat u_L:= u_L \circ\bm{G}\in \widehat V_L$.
    Using \eqref{eq:geoEquiv}, Lemma~\ref{lemma:equivB}
    and Theorem~\ref{theo:QHstab} and the $L^2$-stability of
    $\widehat{\mathbf{Q}}^0_{p,\ell}$, we obtain
    \begin{align*}
    	(\mathbf{Q}^0_{p,\ell}\, u_L,
      \mathbf{Q}^0_{p,\ell}\, u_L)_{\mathcal{A}}
      &\le c
      (\widehat{\mathbf{Q}}^0_{p,\ell}\,\widehat u_L,
      \widehat{\mathbf{Q}}^0_{p,\ell}\,\widehat u_L)_{\widehat{\mathcal{A}}}
      = c\beta \|\widehat{\mathbf{Q}}^0_{p,\ell}\widehat u_{L}\|^2_{L^2(\widehat \Omega)} + c\|\widehat{\mathbf{Q}}^0_{p,\ell}\widehat u_{L}\|^2_{\widehat{\mathcal{B}}}\\
      &\leq c \beta  \|\widehat u_L\|^2_{L^2(\widehat \Omega)} + c\|\widehat u_{L}\|^2_{\widehat{\mathcal{B}}}
      \leq c  (\widehat u_L,\widehat u_L)_{\widehat{\mathcal{A}}}
      \leq C_2  (u_L,u_L)_{\mathcal{A}},
    \end{align*}
    which shows~\eqref{eq:As12}.
    Next we prove the auxiliary result
    \begin{equation}
      \label{eq:aux3}
    \|(I-\mathbf{Q}^0_{p,\ell-1})u_L \|^2_{X_\ell} \leq c \lambda^{-1}_\ell (u_L,u_L)_{\mathcal{A}} \quad \text{for}\quad \ell = 1,\ldots,L.
    \end{equation}
    Using~\eqref{eq:geoEquiv},~\eqref{lemma:equivB}, Theorem~\ref{theo:QHstab}, Theorem~\ref{theo:QHapp} and the $L^2$-stability of $\mathbf{Q}^0_{p,\ell-1}$, we get
    \begin{align*}
      \|(I-\mathbf{Q}^0_{p,\ell-1})u_L \|^2_{X_\ell} &= \|(I-\mathbf{Q}^0_{p,\ell-1})u_L \|^2_{\mathcal{B}}+ (\beta+\hmaxh{\ell}^{-4})\|(I-\mathbf{Q}^0_{p,\ell-1})u_L \|^2_{L^2(\Omega)}\\
      &\le c \|(I-\widehat{\mathbf{Q}}^0_{p,\ell-1})\widehat{u}_L \|^2_{\bar{\mathcal{B}}}+ c(\beta+\hmaxh{\ell}^{-4})\|(I-\widehat{\mathbf{Q}}^0_{p,\ell-1})\widehat{u}_L \|^2_{L^2(\widehat{\Omega})}\\
      &\leq c \|\widehat u_L\|^2_{\bar{\mathcal{B}}}
      + c\hmaxh{\ell}^{-4}\hmaxh{\ell-1}^{4}\|\widehat u_L\|^2_{\widehat{\mathcal{B}}} + c\beta \|\widehat u_L\|^2_{L^2(\widehat \Omega)}\\
      &\leq c(1+\hmaxh{\ell}^{-4}\hmaxh{\ell-1}^{4})\|u_L\|^2_{\mathcal{B}} + c\beta \|u_L\|^2_{L^2(\Omega)}.
    \end{align*}
    We use assumption \eqref{eq:assGrids} and Lemma~\ref{lem:eigenvalue} 
    to get \eqref{eq:aux3}. To complete the proof, we use the fact that $\mathbf{Q}^0_{p,\ell-1}\mathbf{Q}^0_{p,\ell} = \mathbf{Q}^0_{p,\ell-1}$, \eqref{eq:aux3} and \eqref{eq:As12}, to obtain
    \begin{align*}
      \|(\mathbf{Q}^0_{p,\ell}-\mathbf{Q}^0_{p,\ell-1})u_L \|^2_{X_\ell} &= \|(I-\mathbf{Q}^0_{p,\ell-1})\mathbf{Q}^0_{p,\ell}u_L \|^2_{X_\ell}
      \leq c \lambda^{-1}_\ell (\mathbf{Q}^0_{p,\ell} u_L, \mathbf{Q}^0_{p,\ell}u_L)_{\mathcal{A}}\\
      &\leq C_1 \lambda^{-1}_\ell (u_L, u_L)_{\mathcal{A}}.
    \end{align*}
    This shows~\eqref{eq:As11} and finishes the proof.
\end{proof}

\begin{remark}
  In \cite[Lemma 9.2]{sogn2018schur}, a similar result to Theorem~\ref{theo:QHapp} is shown. There, the $\mathcal{B}_\ell$-orthogonal projector is considered. That proof only holds for uniform grids. By using an $L^2$-orthogonal projector, we avoid these difficulties. Since the convergence theory by Hackbusch \cite{hackbusch2013multi} requires the error estimates for the $\mathcal{B}_\ell$-orthogonal projector, this motivated us to use the convergence theory by Bramble \cite{bramble2018multigrid}, where this is not the case.
\end{remark}

\section{The smoothers and the overall convergence results}
\label{sec:smoothers}

\subsection{Subspace corrected mass smoother}

We consider the subspace corrected mass smoother, which was originally proposed in \cite{hofreither2016robust} for a second order problem and was one of the first smoothers to produce a multigrid method for IgA which is robust in both the grid size and the spline degree. In \cite{sogn2019robust,sogn2018schur} this smoother was extended to biharmonic problems. The smoother is based around the inverse inequality in Theorem~\ref{theo:BiInv2}, which is independent of the spline degree.

First, we introduce a splitting for the one dimensional case as follows:
\[
	S_{p,\bm{\tau}}\cap H^1_0(0,1) = S^{0}_{p,\bm{\tau}} \oplus S^{1}_{p,\bm{\tau}},
\]
where $S^{0}_{p,\bm{\tau}}$ is as defined in \eqref{eq:defSEV} and $S^{1}_{p,\bm{\tau}}$ is its $L^2$-orthogonal complement in $S_{p,\bm{\tau}}\cap H^1_0(0,1)$. For each of these spaces, we define the corresponding $L^2$-orthogonal projection
\begin{align*}
  Q_{p,\bm{\tau}}^0: H^2(0,1)\cap H^1_0(0,1) \rightarrow S^0_{p,\bm{\tau}},\\
  Q_{p,\bm{\tau}}^1: H^2(0,1)\cap H^1_0(0,1) \rightarrow S^1_{p,\bm{\tau}}.
\end{align*}

The next step, is to extend the splitting to the multivariate case.
Let $\alpha:=(\alpha_1,\ldots,\alpha_d) \in \lbrace 0, 1\rbrace^d$ be a multiindex. The tensor product B-spline space $\widehat V_\ell = S_{p,\bm{\tau}_\ell }\cap H^1_0(\widehat\Omega)$ with
$\bm{\tau}_\ell=(\bm{\tau}_{\ell,1},\ldots,\bm{\tau}_{\ell,d})$ is split into the direct sum of $2^d$ subspaces
\begin{equation}
  \label{eq:subspaces}
\widehat V_\ell
= \bigoplus_{\alpha \in \lbrace 0,1\rbrace^d } S^\alpha_{p,\bm{\tau}_{\ell}}
\quad\text{where}\quad
S^\alpha_{p,\bm{\tau}_\ell} = S^{\alpha_1}_{p,\bm{\tau}_{\ell,1}}\otimes \cdots\otimes S^{\alpha_d}_{p,\bm{\tau}_{\ell,d}}.
\end{equation}
Again, we define $L^2$-orthogonal projectors 
\begin{equation}
  \nonumber
  \widehat{\mathbf{Q}}_{p,\bm{\tau}_\ell}^\alpha:= Q_{p,\bm{\tau}_{\ell,1}}^{\alpha_1}\otimes \cdots \otimes Q_{p,\bm{\tau}_{\ell,d}}^{\alpha_d}: \widehat V \rightarrow S^\alpha_{p,\bm{\tau}_\ell}.
\end{equation}
The projector $\widehat{\mathbf{Q}}_{p,\bm{\tau}_\ell}^0$ from Section~\ref{subsec:4:3} is consistent with this definition, for the choice $\alpha=0$.
Since the splitting is $L^2$-orthogonal, we obviously have the following
result.
\begin{equation}
  \label{eq:L2equal}
  	\widehat u_\ell = \sum_{\alpha\in \lbrace 0,1\rbrace^d} \widehat{\mathbf{Q}}_{p,\bm{\tau}_\ell}^\alpha \widehat u_\ell
  	\quad
  	\mbox{and}
 	\quad 
\|\widehat u_\ell\|^2_{L^2(\widehat\Omega)} = \sum_{\alpha\in \lbrace 0,1\rbrace^d} \|\widehat{\mathbf{Q}}_{p,\bm{\tau}_\ell}^\alpha \widehat u_\ell \|^2_{L^2(\widehat\Omega)} \quad \forall \widehat u_\ell \in \widehat V_\ell.
\end{equation}
The next theorem shows that the splitting is also stable in $H^2$.
\begin{theorem}
  \label{theo:QHstab2}
  Let $p\in \mathbb{N}$ with $p\geq 3$. Then there exists a constant $c>0$ such that
  \begin{align*}
    c^{-1}\|\widehat u_\ell\|^2_{\bar{\mathcal{B}}} \leq \sum_{\alpha\in \lbrace 0,1\rbrace^d} \|\widehat{\mathbf{Q}}_{p,\bm{\tau}_\ell}^\alpha \widehat u_\ell \|^2_{\bar{\mathcal{B}}} \leq c\|\widehat u_\ell\|^2_{\bar{\mathcal{B}}}
    \quad \forall \widehat u_\ell \in \widehat V_\ell.
    \end{align*}
\end{theorem}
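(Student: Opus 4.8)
The plan is to reduce the multivariate statement to the univariate stability of the splitting $S_{p,\bm\tau}\cap H^1_0(0,1) = S^0_{p,\bm\tau}\oplus S^1_{p,\bm\tau}$ in the $H^2$-seminorm, and then to tensorize. The $\bar{\mathcal B}$-norm is a sum over the coordinate directions $k=1,\ldots,d$ of $\|\partial_{x_k}^2\cdot\|^2_{L^2(\widehat\Omega)}$, so it suffices to treat each such term and sum. Fix a direction $k$. For a tensor-product function $\widehat u_\ell$, the operator $\widehat{\mathbf Q}^\alpha_{p,\bm\tau_\ell}$ factors as a product of commuting univariate projectors acting in the different coordinates, and $\partial_{x_k}^2$ commutes with every univariate $L^2$-projector acting in a direction $l\neq k$. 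So $\partial_{x_k}^2\widehat{\mathbf Q}^\alpha_{p,\bm\tau_\ell}\widehat u_\ell$ can be rewritten by pulling the projectors in directions $l\neq k$ past $\partial_{x_k}^2$, leaving $\partial_{x_k}^2 Q^{\alpha_k}_{p,\bm\tau_{\ell,k}}$ acting in direction $k$, composed with $L^2$-projectors in the other directions.

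The key univariate fact I would first isolate (or invoke from \cite{sogn2019robust}, in the spirit of Theorem~\ref{theo:QHstab1d}) is: there is a constant $c$ with
\[
  c^{-1}\|\partial^2 v\|^2_{L^2(0,1)} \le \|\partial^2 Q^0_{p,\bm\tau}v\|^2_{L^2(0,1)} + \|\partial^2 Q^1_{p,\bm\tau}v\|^2_{L^2(0,1)} \le c\|\partial^2 v\|^2_{L^2(0,1)}
\]
for all $v\in S_{p,\bm\tau}\cap H^1_0(0,1)$. The upper bound for the $\alpha_k=0$ component follows from Theorem~\ref{theo:QHstab1d}; for the $\alpha_k=1$ component one uses that $Q^1_{p,\bm\tau}v = v - Q^0_{p,\bm\tau}v$ on the discrete space together with the triangle inequality. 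The lower bound is more delicate — this is where the quasi-uniformity \eqref{eq:quasiuniform} and the inverse inequality Theorem~\ref{theo:BiInv2} enter, controlling the cross term $(\partial^2 Q^0_{p,\bm\tau}v,\partial^2 Q^1_{p,\bm\tau}v)_{L^2}$ away from $-1$ times the product of the norms; equivalently one shows the splitting is not too far from $H^2$-orthogonal, uniformly in $p$.

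With the univariate two-sided estimate in hand, the multivariate upper bound follows by: expanding $\|\widehat u_\ell\|^2_{\bar{\mathcal B}}=\sum_k\|\partial_{x_k}^2\widehat u_\ell\|^2_{L^2}$; for fixed $k$, applying the $L^2$-stability and $L^2$-orthogonality \eqref{eq:L2equal} of the projectors in directions $l\neq k$ (which is exact, with constant $1$, by a Fubini/slice argument), reducing to the univariate upper bound in direction $k$ applied slicewise; and summing over $k$ and over the $2^{d-1}$ choices of $\alpha_l$ for $l\neq k$. The multivariate lower bound is obtained the same way, but running the univariate lower bound slicewise and using that $\sum_{\alpha}\|\partial_{x_k}^2\widehat{\mathbf Q}^\alpha\widehat u_\ell\|^2$ dominates $\sum_{\alpha_k\in\{0,1\}}\|\partial_{x_k}^2(\cdots Q^{\alpha_k}_{p,\bm\tau_{\ell,k}}\cdots)\widehat u_\ell\|^2$ after collapsing the exact $L^2$-splitting in the other directions. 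The main obstacle is the univariate lower bound: showing that the angle between $S^0_{p,\bm\tau}$ and $S^1_{p,\bm\tau}$ in the $H^2$-seminorm is bounded below independently of $p$ and of the (merely quasi-uniform) grid; the $p$-robust inverse inequality of Theorem~\ref{theo:BiInv2} on $S^0_{p,\bm\tau}$, combined with the $L^2$-approximation estimates of Theorems~\ref{theo:Pi0}–\ref{theo:Q}, is precisely the tool that makes this work, exactly as in the design of the subspace corrected mass smoother.
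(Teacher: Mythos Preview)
Your treatment of the right (upper) inequality is essentially the paper's: univariate $H^2$-stability of $Q^0_{p,\bm\tau}$ is Theorem~\ref{theo:QHstab1d}, that of $Q^1_{p,\bm\tau}=I-Q^0_{p,\bm\tau}$ follows by the triangle inequality, and the passage to $d$ dimensions is the same tensorization argument as in Theorem~\ref{theo:QHstab}.

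Where you go astray is the left (lower) inequality. You call it ``more delicate'' and propose to control the $H^2$-angle between $S^0_{p,\bm\tau}$ and $S^1_{p,\bm\tau}$ via the inverse inequality. This is unnecessary. Because the splitting is a direct sum on the discrete space, \eqref{eq:L2equal} gives $\widehat u_\ell=\sum_{\alpha}\widehat{\mathbf Q}^\alpha_{p,\bm\tau_\ell}\widehat u_\ell$, and the triangle inequality plus Cauchy--Schwarz yield
\[
  \|\widehat u_\ell\|_{\bar{\mathcal B}}^2
  =\Bigl\|\sum_{\alpha}\widehat{\mathbf Q}^\alpha_{p,\bm\tau_\ell}\widehat u_\ell\Bigr\|_{\bar{\mathcal B}}^2
  \le 2^{d}\sum_{\alpha}\|\widehat{\mathbf Q}^\alpha_{p,\bm\tau_\ell}\widehat u_\ell\|_{\bar{\mathcal B}}^2,
\]
which is exactly the left inequality with $c=2^d$. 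No angle bound, no inverse inequality, and no quasi-uniformity are needed here; this is precisely how the paper handles it (``the left inequality follows from~\eqref{eq:L2equal} and the triangle inequality''). Note also that the cross-term argument you sketch actually produces a bound of the form $\|\partial^2 v\|^2\ge(1-\theta)\bigl(\|\partial^2 Q^0 v\|^2+\|\partial^2 Q^1 v\|^2\bigr)$, which is the \emph{upper} inequality, not the lower one; so besides being unnecessary, it is attached to the wrong side.
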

\begin{proof}
	Theorem~\ref{theo:QHstab1d} states the stability of $Q^0_{p,\bm{\tau}_\ell}$ in the $H^2$-seminorm. The stability of $Q^1_{p,\bm{\tau}_\ell}$ in the $H^2$-seminorm follows using the triangle inequality. The stability of these statements in the $L^2$-norm is obvious. From these observations, the right inequality follows by arguments that are completely analogous to those of the proof of Theorem~\ref{theo:QHstab}.
	
	The left inequality follows from~\eqref{eq:L2equal} and the triangle inequality.
\end{proof}    

For notational convenience, we restrict the setup of the smoother
to the two dimensional case. For notational convenience, we write
the splitting \eqref{eq:subspaces} as
\[
\widehat V_\ell = S^{00}_{p,\bm{\tau}_\ell} \oplus S^{01}_{p,\bm{\tau}_\ell} \oplus S^{10}_{p,\bm{\tau}_\ell} \oplus S^{11}_{p,\bm{\tau}_\ell},
\quad\text{where}\quad
S^{\alpha_1,\alpha_2}_{p,\bm{\tau}_\ell}
=
S^{\alpha_1}_{p,\bm{\tau}_{\ell,1}}
\otimes
S^{\alpha_2}_{p,\bm{\tau}_{\ell,2}}.
\]
Following the ideas of~\cite{hofreither2016robust,sogn2019robust}, we construct local smoothers $L_\alpha$
for any of the spaces $V_{\ell,\alpha}:= S^{\alpha}_{p,\bm{\tau}_\ell}$.
These local contributions are chosen such that they satisfy the corresponding local condition 
\begin{equation}\nonumber
  \bar{\mathcal{B}}_{\ell,\alpha} +\beta \widehat{\mathcal{M}}_{\ell,\alpha}\le L_{\ell,\alpha} \le c (\bar{\mathcal{B}}_{\ell,\alpha} + (\beta + h^{-4}) \widehat{\mathcal{M}}_{\ell,\alpha}),
\end{equation}
where
\[
\bar{\mathcal{B}}_{\ell,\alpha} := \mathbf{P}_{\ell,\alpha}^T \bar{\mathcal{B}}_\ell \mathbf{P}_{\ell,\alpha}
\qquad\mbox{and}\qquad
\widehat{\mathcal M}_{\ell,\alpha} := \mathbf{P}_{\ell,\alpha}^T \widehat{\mathcal M}_\ell \mathbf{P}_{\ell,\alpha}
\]
and $\mathbf{P}_{\ell,\alpha}$ is the matrix representation of the canonical embedding $V_{\ell,\alpha}\rightarrow V_\ell$.
The canonical embedding has tensor product structure, i.e., $P_{\ell,\alpha_1}\otimes\cdots\otimes P_{\ell,\alpha_d}$,
where the $P_{\ell,\alpha_i}$ are the matrix representations of the corresponding univariate embeddings.
In the two-dimensional case, $\bar{\mathcal{B}}_\ell$ and $\widehat{\mathcal M}_\ell$ have the representation
\begin{equation*}
\bar{\mathcal{B}}_\ell = B \otimes M + M \otimes B
\quad\mbox{and}\quad
\widehat{\mathcal M}_\ell = M \otimes M,
\end{equation*}
where $B$ and $M$ are the corresponding univariate stiffness and mass matrices (not necessarily equal for both spacial directions). For notational convenience, we do not indicate the spacial direction and the grid level for these matrices.
Restricting $\bar{\mathcal{B}}_\ell$ to the subspace $V_{\ell,(\alpha_1,\alpha_2)}$ gives
\begin{equation*}
\bar{\mathcal{B}}_{\ell,(\alpha_1,\alpha_2)} = B_{\alpha_1} \otimes M_{\alpha_2} + M_{\alpha_1} \otimes B_{\alpha_2},
\end{equation*}
where $B_{\alpha_i} = P^T_{\ell,\alpha_i} B P_{\ell,\alpha_i}$ and $M_{\alpha_i} = P^T_{\ell,\alpha_i} M P_{\ell,\alpha_i}$.
We define
\[
\bar{\mathcal{A}}_\ell :=\bar{\mathcal{B}}_\ell + \beta \widehat{\mathcal{M}}_\ell \quad\text{and}\quad
\bar{\mathcal{A}}_{\alpha_1,\alpha_2} := \bar{\mathcal{B}}_{\alpha_1,\alpha_2}+\beta\widehat{\mathcal{M}}_{\alpha_1,\alpha_2}.
\]
The inverse inequality for $S^{0}_{p,\bm{\tau}_{\ell,i}}$ (Theorem \ref{theo:BiInv2}), allows us to estimate
\begin{equation*}
B_0\leq \sigma M_0,
\end{equation*}
where $\sigma =  \sigma_0 h_{\ell,\mathrm{min}}^{-4}$ and $\sigma_0 = 144$. Using this, we define the smoothers $L_{\alpha_1,\alpha_2}$ as follows and
obtain estimates for them as follows:
\begin{equation}\nonumber
\begin{aligned}
\bar{\mathcal{A}}_{00} &\leq (2\sigma  + \beta) M_0\otimes M_0 &=: L_{00} \le c(\bar{\mathcal{A}}_{00}+h^{-4} \widehat{\mathcal{M}}_{00}),\\
\bar{\mathcal{A}}_{01} &\leq   M_0 \otimes\left((\sigma +\beta)M_1 +B_1\right) &=: L_{01} \le c(\bar{\mathcal{A}}_{01}+h^{-4} \widehat{\mathcal{M}}_{01}),\\
\bar{\mathcal{A}}_{10} &\leq  \left(B_1  + (\sigma + \beta)M_1 \right)\otimes M_0 &=: L_{10}\le c(\bar{\mathcal{A}}_{10}+h^{-4} \widehat{\mathcal{M}}_{10}),\\
\bar{\mathcal{A}}_{11} &=  B_1 \otimes M_1  + M_1 \otimes B_1 + \beta M_1 \otimes M_1 &=: L_{11}\le c(\bar{\mathcal{A}}_{11}+h^{-4} \widehat{\mathcal{M}}_{11}).
\end{aligned}
\end{equation}
The extension to three and more dimensions is completely straight-forward (cf.~\cite{hofreither2016robust}).
For each of the subspaces $V_{\ell,\alpha}$, we have defined a symmetric and positive
definite smoother $L_\alpha$. The overall smoother is given by 
\begin{equation*}
L_{\ell}:=\sum_{\alpha\in\{0,1\}^d} (\mathbf{Q}^{D,\alpha})^T L_{\alpha} \mathbf{Q}^{D,\alpha},
\end{equation*}
where $\mathbf{Q}^{D,\alpha}= \widehat{\mathcal{M}}_{\alpha}^{-1}\mathbf{P}^T_{\ell,\alpha}\widehat{\mathcal{M}}_{\ell}$ is the matrix representation of the $L^2$-projection from $V_\ell$ to $V_{\ell,\alpha}$. Completely
analogous to~\cite[Section~5.2]{hofreither2016robust}, we obtain
\begin{equation*}
L_{\ell}^{-1}=\sum_{\alpha\in\{0,1\}^d} \mathbf{P}_{\ell,\alpha} L^{-1}_{\alpha} \mathbf{P}^T_{\ell,\alpha}.
\end{equation*}

\begin{theorem}
  \label{theo:SCMS}
  Let $d\in \mathbb{N}$ and $p\in \mathbb{N}$ with $p\geq 3$. The subspace corrected mass smoother $L_{\ell}$, satisfies \eqref{eq:smo1}, i.e.,
  \[
  (\mathcal{A}_\ell\uv{\ell},\uv{\ell}) \leq\frac{1}{\tau_{\ell}}(L_{\ell}\uv{\ell},\uv{\ell}) \leq C_S\,\lambda_\ell \, ((\mathcal{A}_\ell+h^{-4}\mathcal{M}_\ell)\uv{\ell},\uv{\ell}) \quad \foralls \uv{\ell}\in \mathbb{R}^{\dim V_{\ell}} 
  \]
  for all $\tau\in(0,\tau_0)$, where $\tau_0>0$ is some constant. \end{theorem}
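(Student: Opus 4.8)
The plan is to reduce the two inequalities in~\eqref{eq:smo1} to a pair of matrix inequalities on the parameter domain for the simplified form, and then to read both off from the \emph{exact} $L^2$-orthogonality~\eqref{eq:L2equal} of the subspace splitting and from its $p$-robust $H^2$-stability, Theorem~\ref{theo:QHstab2}. Write $\widehat u_\ell\in\widehat V_\ell$ for the pull-back of $u_\ell\in V_\ell$, which has the same coefficient vector $\uv{\ell}$. By~\eqref{eq:geoEquiv} we have the spectral equivalences $\mathcal M_\ell\sim\widehat{\mathcal M}_\ell$ and $\mathcal B_\ell\sim\widehat{\mathcal B}_\ell$, and Lemma~\ref{lemma:equivB} gives $\widehat{\mathcal B}_\ell\sim\bar{\mathcal B}_\ell$ (all with constants of the admissible type); since $\mathcal A_\ell$ and $\mathcal A_\ell+h^{-4}\mathcal M_\ell$ are linear combinations of $\mathcal M_\ell$ and $\mathcal B_\ell$ with the nonnegative coefficients $\beta$ (resp.~$\beta+h^{-4}$) and $1$, it follows, uniformly in $\beta$, $h$ and $p$, that $\mathcal A_\ell\sim\bar{\mathcal A}_\ell$ and $\mathcal A_\ell+h^{-4}\mathcal M_\ell\sim\bar{X}_\ell:=\bar{\mathcal A}_\ell+h^{-4}\widehat{\mathcal M}_\ell$. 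Using also $\lambda_\ell\ge c^{-1}$ from Lemma~\ref{lem:eigenvalue}, it suffices to prove the two matrix inequalities $\bar{\mathcal A}_\ell\le c\,L_\ell$ and $L_\ell\le c\,\bar{X}_\ell$: with $\tau_0$ the reciprocal of the first constant, the left inequality of~\eqref{eq:smo1} then holds for all $\tau_\ell\in(0,\tau_0)$, and the right one follows from the second, together with $\lambda_\ell\ge c^{-1}$, by choosing $C_S$ proportional to $\tau_0^{-1}$.

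For the two matrix inequalities I first unwind the additive structure. From $L_\ell=\sum_{\alpha\in\{0,1\}^d}(\mathbf Q^{D,\alpha})^TL_\alpha\mathbf Q^{D,\alpha}$ we get
\[
 (L_\ell\uv{\ell},\uv{\ell})=\sum_{\alpha\in\{0,1\}^d}\bigl(L_\alpha\,\mathbf Q^{D,\alpha}\uv{\ell},\,\mathbf Q^{D,\alpha}\uv{\ell}\bigr).
\]
Now $\mathbf P_{\ell,\alpha}\mathbf Q^{D,\alpha}\uv{\ell}$ is the coefficient vector of $\widehat{\mathbf Q}^\alpha_{p,\bm{\tau}_\ell}\widehat u_\ell$, and $\mathbf P_{\ell,\alpha}^T\bar{X}_\ell\mathbf P_{\ell,\alpha}=\bar{\mathcal B}_{\ell,\alpha}+(\beta+h^{-4})\widehat{\mathcal M}_{\ell,\alpha}$. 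Hence the local estimates $\bar{\mathcal B}_{\ell,\alpha}+\beta\widehat{\mathcal M}_{\ell,\alpha}\le L_\alpha\le c(\bar{\mathcal B}_{\ell,\alpha}+(\beta+h^{-4})\widehat{\mathcal M}_{\ell,\alpha})$ established in the construction of the smoother above — which rest on the $p$-independent inverse inequality $B_0\le\sigma M_0$ of Theorem~\ref{theo:BiInv2} and on $\sigma\sim h^{-4}$, a consequence of quasi-uniformity~\eqref{eq:quasiuniform} — translate each summand into
\[
 \|\widehat{\mathbf Q}^\alpha_{p,\bm{\tau}_\ell}\widehat u_\ell\|_{\bar{\mathcal B}}^2+\beta\|\widehat{\mathbf Q}^\alpha_{p,\bm{\tau}_\ell}\widehat u_\ell\|_{L^2(\widehat\Omega)}^2
 \le\bigl(L_\alpha\,\mathbf Q^{D,\alpha}\uv{\ell},\,\mathbf Q^{D,\alpha}\uv{\ell}\bigr)
 \le c\Bigl(\|\widehat{\mathbf Q}^\alpha_{p,\bm{\tau}_\ell}\widehat u_\ell\|_{\bar{\mathcal B}}^2+(\beta+h^{-4})\|\widehat{\mathbf Q}^\alpha_{p,\bm{\tau}_\ell}\widehat u_\ell\|_{L^2(\widehat\Omega)}^2\Bigr).
\]

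Summing over $\alpha$ and inserting the \emph{exact} identity $\sum_\alpha\|\widehat{\mathbf Q}^\alpha_{p,\bm{\tau}_\ell}\widehat u_\ell\|_{L^2(\widehat\Omega)}^2=\|\widehat u_\ell\|_{L^2(\widehat\Omega)}^2$ from~\eqref{eq:L2equal}, together with the two-sided bound $c^{-1}\|\widehat u_\ell\|_{\bar{\mathcal B}}^2\le\sum_\alpha\|\widehat{\mathbf Q}^\alpha_{p,\bm{\tau}_\ell}\widehat u_\ell\|_{\bar{\mathcal B}}^2\le c\|\widehat u_\ell\|_{\bar{\mathcal B}}^2$ of Theorem~\ref{theo:QHstab2}, yields
\[
 c^{-1}\bigl(\|\widehat u_\ell\|_{\bar{\mathcal B}}^2+\beta\|\widehat u_\ell\|_{L^2(\widehat\Omega)}^2\bigr)
 \le(L_\ell\uv{\ell},\uv{\ell})
 \le c\bigl(\|\widehat u_\ell\|_{\bar{\mathcal B}}^2+(\beta+h^{-4})\|\widehat u_\ell\|_{L^2(\widehat\Omega)}^2\bigr),
\]
i.e.~$\bar{\mathcal A}_\ell\le c\,L_\ell$ and $L_\ell\le c\,\bar{X}_\ell$. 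Combined with the reduction of the first paragraph this proves the theorem; as in the construction of the smoother the argument is carried out for $d=2$, the general case being identical up to notation.

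The heavy lifting — the $p$-robustness — is not done here: it is entirely inherited from Theorem~\ref{theo:BiInv2} and Theorem~\ref{theo:QHstab2}, whose constants do not depend on $p$. The one point that needs genuine care in this proof is robustness with respect to $\beta$: the mass contributions $\beta\widehat{\mathcal M}_\ell$ and $h^{-4}\widehat{\mathcal M}_\ell$ in $\bar{X}_\ell$ must be reproduced through the \emph{exact} $L^2$-orthogonal decomposition~\eqref{eq:L2equal}, with constant $1$, so that no $\beta$-dependent factor can appear as $\beta\to\infty$; only the bending part $\bar{\mathcal B}_\ell$ picks up the ($p$-, $h$- and $\beta$-independent) equivalence constants of Theorem~\ref{theo:QHstab2}. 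Once both matrix inequalities are in hand, the passage to~\eqref{eq:smo1}, and from there via Lemma~\ref{lem:smo1} to the smoother properties~\eqref{eq:B34} and~\eqref{eq:PositiveSmo}, is routine bookkeeping.
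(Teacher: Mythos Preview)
Your proof is correct and follows the same route as the paper: establish the two-sided bound $\bar{\mathcal A}_\ell\le c\,L_\ell\le c\,\bar X_\ell$ on the parameter domain via the local estimates, the exact $L^2$-splitting~\eqref{eq:L2equal}, and the $H^2$-stability of the splitting (Theorem~\ref{theo:QHstab2}), then transfer to $\mathcal A_\ell$ using Lemma~\ref{lemma:equivB} and~\eqref{eq:geoEquiv}, and absorb $\lambda_\ell$ via Lemma~\ref{lem:eigenvalue}. The only difference is presentational: the paper outsources the $\beta=0$ inequality to \cite[Theorem~17]{sogn2019robust} and then remarks that~\eqref{eq:L2equal} handles the mass term, whereas you spell the whole argument out self-containedly using ingredients already stated in this paper.
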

\begin{proof}
  The inequality
  \[
  (\bar{\mathcal{A}}_\ell\uv{\ell},\uv{\ell}) \leq(L_{\ell}\uv{\ell},\uv{\ell}) \leq c ((\bar{\mathcal{A}}_\ell+h^{-4}\widehat{\mathcal{M}}_\ell)\uv{\ell},\uv{\ell})
  \]
  was shown in \cite[Theorem 17]{sogn2019robust} for $\beta = 0$. Note that no part of that proof requires uniform grids. So, the proof can be used almost verbatim also in the context of this paper. Using \eqref{eq:L2equal}, the extension to $\beta>0$ is straight forward. Using this and Lemma~\ref{lemma:equivB}, we get
  \begin{align*}
    (\widehat{\mathcal{A}}_\ell\uv{\ell},\uv{\ell}) &\leq d (\bar{\mathcal{A}}_\ell\uv{\ell},\uv{\ell}) \leq\frac{d}{\tau_{\ell}}(L_{\ell}\uv{\ell},\uv{\ell})
    \leq c ((\widehat{\mathcal{A}}_\ell+h^{-4}\widehat{\mathcal{M}}_\ell)\uv{\ell},\uv{\ell})
  \end{align*}
  for some constant $c>0$.
  Using~\eqref{eq:geoEquiv}, we obtain
  \begin{align*}
    ({\mathcal{A}}_\ell\uv{\ell},\uv{\ell}) &\leq\frac{c_1}{\tau_{\ell}}(L_{\ell}\uv{\ell},\uv{\ell})
    \leq c_2 ((\mathcal{A}_\ell+h^{-4}\mathcal{M}_\ell)\uv{\ell},\uv{\ell})
  \end{align*}
  for some constants $c_1,c_2>0$,
  which finishes the proof since $\lambda_\ell$ is bounded from below
  by a constant (Lemma~\ref{lem:eigenvalue}).
\end{proof}

\begin{corollary}\label{final:SCMS}
		Suppose that we solve the linear system~\eqref{eq:probMat} using a
		multigrid solver as outlined in Section~\ref{sec:MG} and
		using the subspace corrected mass smoother as outlined in
		Section~\ref{sec:smoothers}, then the convergence of the multigrid
		solver is described by the relation
		\begin{equation}\label{eq:finalconvergence}
  \left((I-B^s_L\mathcal{A}_L)\uv{L},\uv{L}\right)_{\mathcal{A}_L}
  \leq \left(1-\frac{1}{CL}\right)
  \left(\uv{L},\uv{L}\right)_{\mathcal{A}_L}		
		,
		\end{equation}
		where the constant $C$ is independent of the grid sizes $h_\ell$,
		the number of levels $L$, the spline degree $p$ and the choice
		of the scaling parameter~$\beta$. It may
		depend on $d$, the constants $c_1$, $c_2$, $c_q$, and $c_r$
		and the shape of $\Omega$, cf. Notation~\ref{notation:c}.
\end{corollary}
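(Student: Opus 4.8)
The plan is to obtain Corollary~\ref{final:SCMS} as a bookkeeping assembly of the results of Sections~\ref{sec:MG}--\ref{sec:smoothers}: one simply checks that every hypothesis of the abstract convergence theorem, Theorem~\ref{thrm:abstract}, is met, with all occurring constants independent of $h_\ell$, $L$, $p$ and $\beta$. First I would instantiate the abstract framework with exactly the data announced in Section~\ref{sec:MG}, namely $a(u,v)=\beta(u,v)_{L^2(\Omega)}+(u,v)_{\mathcal B}$, $\mathcal{A}_\ell=\beta\mathcal{M}_\ell+\mathcal{B}_\ell$, $X_\ell=\mathcal{B}_\ell+(\beta+h_\ell^{-4})\mathcal{M}_\ell$, and $\Smo{\ell}$ with $L_\ell$ the subspace corrected mass smoother built in Section~\ref{sec:smoothers} and $\tau_\ell=\tau\in(0,\tau_0)$ a fixed damping parameter. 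Then $\mathcal{A}_\ell$ is symmetric, bounded and coercive, $X_\ell$ and $L_\ell$ are symmetric positive definite, and the setting of Section~\ref{sec:MG} applies verbatim; moreover Lemma~\ref{lem:eigenvalue} tells us that the largest eigenvalue $\lambda_\ell$ of $X_\ell^{-1}\mathcal{A}_\ell$ satisfies $\lambda_\ell\in(\tfrac1{1+c},1)$, so the factors $\lambda_\ell^{-1}$ entering the forthcoming bounds are harmless.

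For the approximation properties I would take $Q_\ell:=\mathbf{Q}^0_{p,\ell}$ for $\ell=0,\ldots,L-1$ and $Q_L:=I$, as required by Theorem~\ref{thrm:abstract}. Then \eqref{eq:ass:approx2} is precisely \eqref{eq:As12}, and \eqref{eq:ass:approx1} for $\ell=1,\ldots,L-1$ is precisely \eqref{eq:As11}; for the remaining case $\ell=L$, the left-hand side of \eqref{eq:ass:approx1} becomes $\|(I-\mathbf{Q}^0_{p,L-1})u_L\|_{X_L}^2$, which is exactly the auxiliary estimate \eqref{eq:aux3} (with $\ell=L$) established inside the proof of Theorem~\ref{theo:appProof}. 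Hence \eqref{eq:ass:approx1}--\eqref{eq:ass:approx2} hold with the constants $C_1,C_2$ of Theorem~\ref{theo:appProof}, which by Notation~\ref{notation:c} do not depend on $h_\ell$, $L$, $p$ or $\beta$.

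For the smoother properties I would invoke Theorem~\ref{theo:SCMS}, observing that the matrix $\mathcal{A}_\ell+h_\ell^{-4}\mathcal{M}_\ell$ appearing in its statement is precisely our $X_\ell$ from Section~\ref{subsec:4:3}; this yields \eqref{eq:smo1} with a $\beta$- and $p$-independent constant $C_S$. Feeding \eqref{eq:smo1} into Lemma~\ref{lem:smo1} then produces the conditions \eqref{eq:B34} and \eqref{eq:PositiveSmo} demanded by Theorem~\ref{thrm:abstract}, with the same $C_S$. Finally I would apply Theorem~\ref{thrm:abstract}, which gives \eqref{eq:finalconvergence} with $C=[1+C_2^{1/2}+(C_SC_1)^{1/2}]^2$; since $C_1$, $C_2$ and $C_S$ are each independent of $h_\ell$, $L$, $p$ and $\beta$, so is $C$, and it may depend only on the quantities listed in Notation~\ref{notation:c}.

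I do not expect any genuine obstacle: the hard analytical work has been carried out in Theorems~\ref{theo:appProof} and~\ref{theo:SCMS}. The only two points that need care are (i) identifying the symbol $X_\ell$ used in the statement of Theorem~\ref{theo:SCMS} with the concrete choice $\mathcal{B}_\ell+(\beta+h_\ell^{-4})\mathcal{M}_\ell$ of Section~\ref{subsec:4:3}, so that the smoother estimate is genuinely an estimate in the $X_\ell$-inner product; and (ii) reading the $\ell=L$ instance of the approximation property through $Q_L=I$ (supplied by \eqref{eq:aux3}) rather than through $\mathbf{Q}^0_{p,L}$, since $\mathbf{Q}^0_{p,L}$ is by construction not the identity on $V_L$ but only a projector onto the subspace of $V_L$ with vanishing even outer normal derivatives.
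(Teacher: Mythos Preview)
Your proposal is correct and follows exactly the paper's route: invoke Theorem~\ref{thrm:abstract}, verify its approximation hypotheses via Theorem~\ref{theo:appProof} and its smoother hypotheses via Lemma~\ref{lem:smo1} combined with Theorem~\ref{theo:SCMS}. Your treatment is in fact more careful than the paper's one-line proof, since you explicitly handle the $\ell=L$ instance of \eqref{eq:ass:approx1} (where $Q_L=I$ rather than $\mathbf{Q}^0_{p,L}$) by appealing to the auxiliary estimate \eqref{eq:aux3}; the paper silently folds this into its citation of Theorem~\ref{theo:appProof}.
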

\begin{proof}
		We use Theorem~\ref{thrm:abstract}, whose assumptions are shown
		by Theorem~\ref{theo:appProof} and the combination of
		Lemma~\ref{lem:smo1} and Theorem~\ref{theo:SCMS}.
\end{proof}

\begin{remark}
The operator $L^{-1}_{\ell}$ can be applied efficiently because all of the local contributions
$L_{00}$, $L_{01}$ and $L_{10}$ can be inverted efficiently because they are tensor products. For example,
we have $L_{00}^{-1} = \frac{1}{2\sigma+\beta} (M^{-1}_0 \otimes I)(I \otimes M^{-1}_0)$,
where both $M^{-1}_0 \otimes I$ and $I \otimes M^{-1}_0$ can be realized by applying direct solvers for
the univariate mass matrix to several right-hand sides. The operator $L_{11}$ is the sum of two
tensor products. So, it has to be inverted as a whole. However, the dimension of the
corresponding space is so small that the corresponding computational costs are negligible. More details on
how to realize the smoother computationally efficient, are given
in \cite[Section 5]{hofreither2016robust}. There, it is outlined where an efficient realization of the subspace corrected mass smoother is also possible in case of more than two dimensions.
\end{remark}

\subsection{Symmetric Gauss-Seidel smoother and a hybrid smoother}
\label{ssub:hybrid}
The second smoother we consider is a symmetric Gauss-Seidel smoother consisting of one forward sweep and one backward sweep. It can be shown that this smoother
satisfies Condition~\eqref{eq:smo1}, where the constant $C_S$ depends on the spline degree, see \cite{sogn2019robust}. This means that also the overall convergence result~\eqref{eq:finalconvergence} holds, where again $C$ depends on the spline degree. The symmetric Gauss-Seidel smoother works well for domains with a nontrivial geometry transformations, but degenerated for large spline degrees (cf. \cite{gahalaut2013multigrid,hofreither2014spectral}).

Since the symmetric Gauss-Seidel smoother works well for nontrivial geometry transformations and the subspace corrected mass smoother is robust with respect to the spline degree, we combine these smoothers into a hybrid smoother, which was first introduced in \cite{sogn2019robust}. This hybrid smoother consists of one forward Gauss-Seidel sweep, followed by one step of the subspace corrected mass smoother, finally followed by one backward Gauss-Seidel sweep.

\section{Numerical experiments}
\label{sec:numerical}
In this section, we present the results of numerical experiments performed with the proposed algorithm. As computational domains, we first consider the unit square, then we consider the nontrivial geometries displayed in Figures~\ref{fig:domains2d} (two dimensional domain) and \ref{fig:domains3d} (three-dimensional domain). We consider the problem
\begin{align}
   \nonumber
  \begin{split}
    \beta u + \Delta^2 u  &= f \quad \text{in} \quad \Omega,\\
    u  &= g_1 \quad \text{on} \quad \partial\Omega,\\
    \Delta u  &= g_2 \quad \text{on} \quad \partial\Omega,
  \end{split}
\end{align}
where
\begin{align*}
  f(x) = (\beta+d^2\pi^4)\prod^d_{k=1}\sin(\pi x_k),\quad  g_1(x) = \prod^d_{k=1}\sin(\pi x_k),\quad g_2(x) = -d\pi^2\prod^d_{k=1}\sin(\pi x_k).
\end{align*}
The discretization space on the parameter domain is the space of tensor-product B-splines. On the coarsest level ($\ell = 0$), we choose
\begin{equation}
  \label{eq:gridpoints}
  \bm{\tau}_{0,i}=
  (0,\,1/3,\,1/2,\,4/5,\,1),
\end{equation}
for all spacial directions $i=1,\ldots,d$. The discretization on level $\ell$ is obtained by preforming $\ell$ uniform $h$-refinement steps. The spline spaces have maximum continuity and spline degree $p$. We solve the resulting system using the preconditioned conjugate gradient (PCG) with a V-cycle multigrid method with 1 pre and 1 post smoothing step, as preconditioner. A random initial guess is used and the stopping criteria is
\[
\|\underline{r}^{(k)}_L\| \leq 10^{-8}\|\underline{r}^{(0)}_L\|,
\]
where $\underline{r}^{(k)}_L:= \underline{f}_L- \mathcal{A}_L\underline{x}^{(k)}_L$ is the residual at step $k$ and $\|\cdot\|$ denotes the Euclidean norm.
All numerical experiments are implemented using the G+Smo library~\cite{gismoweb}.

\subsection{Numerical experiments on parameter domain}
We start with the unit square as the domain, that is, $\Omega = (0,1)^2$. Note that $g_1(x)=g_2(x)=0$ for this domain. For now, we consider the symmetric Gauss-Seidel smoother and the subspace corrected mass smoother. For both smoothers, we choose $\tau = 1$.
The iteration counts are displayed in Table~\ref{t:Para2Db1} for $\beta = 1$, and in Table~\ref{t:Para2Db1e7} for $\beta = 10^7$.
\begin{table}[ht]
\begin{center} 
 \begin{tabular}{| c || c | c | c | c | c | c | c |}
   \hline
   {$\ell\;\diagdown\; p$}&
   {\quad3\quad} & {\quad4\quad} & {\quad5\quad} & {\quad6\quad} &
   {\quad7\quad} & {\quad8\quad} & {\quad9\quad} \\ \hline \hline
 \multicolumn{8}{|l|}{Symmetric Gauss-Seidel} \\ \hline 
   5    & 10 & 16 & 28 & 45 & 71 & 120 & 210\\  \hline
   6    & 10 & 16 & 27 & 44 & 71 & 119 & 209\\   \hline
   7    & 10 & 16 & 27 & 44 & 72 & 117 & 212\\  \hline
   8    & 11 & 16 & 27 & 45 & 72 & 120 & 221\\  \hline\hline
   \multicolumn{8}{|l|}{Subspace corrected mass smoother, $\sigma^{-1}_0 = 0.02$} \\ \hline e
   5    & 126 & 122 & 114 & 105 &  98 &  93 & 85\\  \hline
   6    & 131 & 129 & 123 & 116 & 110 & 105 & 100\\   \hline
   7    & 132 & 133 & 127 & 121 & 116 & 110 & 106\\   \hline
   8    & 133 & 134 & 130 & 124 & 118 & 114 & 110\\   \hline
 \end{tabular}
 \caption{Iteration counts for 2D parametric domain, $\beta = 1$}
 \label{t:Para2Db1}
\end{center}
\end{table}
\begin{table}[ht]
\begin{center} 
 \begin{tabular}{| c || c | c | c | c | c | c | c |}
   \hline
   {$\ell\;\diagdown\; p$}&
   {\quad3\quad} & {\quad4\quad} & {\quad5\quad} & {\quad6\quad} &
   {\quad7\quad} & {\quad8\quad} & {\quad9\quad} \\ \hline \hline
   \multicolumn{8}{|l|}{Symmetric Gauss-Seidel} \\ \hline
   5    & 10 & 16 & 28 & 45 & 71 & 119 & 211\\  \hline
   6    & 10 & 16 & 27 & 44 & 71 & 118 & 208\\   \hline
   7    & 10 & 16 & 27 & 44 & 72 & 117 & 212\\  \hline
   8    & 11 & 16 & 27 & 45 & 72 & 119 & 221\\   \hline
    \multicolumn{8}{|l|}{Subspace corrected mass smoother, $\sigma^{-1}_0 = 0.02$} \\ \hline 
   5    & 124 & 121& 113& 104& 96 & 92 & 85\\  \hline
   6    & 131 & 129& 123& 116& 110& 105& 99\\   \hline
   7    & 132 & 133 & 127 & 120 & 116 & 110 & 106\\   \hline
   8    & 133 & 134 & 130 & 124 & 116 & 118 & 114\\   \hline
 \end{tabular}
 \caption{Iteration counts for 2D parametric domain, $\beta = 10^7$}
 \label{t:Para2Db1e7}
\end{center}
\end{table}

From the tables, we see that the symmetric Gauss-Seidel smoother preforms well for small spline degrees, but degenerates for larger spline degrees. These results are not surprising since it is known that standard smoothers do not work well for large spline degrees (cf. \cite{gahalaut2013multigrid,hofreither2014spectral}).
Due to Corollary~\ref{final:SCMS}, the multigrid solver with subspace corrected mass smoother is robust with respect to the spline degree. The tables do reflex this. However, the iteration numbers are relatively high. Table~\ref{t:Para2Db1uniform} shows the iteration numbers when using an uniform grid with spacing $1/4$ on the coarsest level ($\ell=0$), rather than the grid \eqref{eq:gridpoints}.
The numbers in Table~\ref{t:Para2Db1uniform} are significantly smaller. This implies that the subspace corrected mass smoother is sensitive to the quasi-uniformity constant $c_q$.
\begin{table}[ht]
\begin{center} 
 \begin{tabular}{| c || c | c | c | c | c | c | c |}
   \hline
   {$\ell\;\diagdown\; p$}&
   {\quad3\quad} & {\quad4\quad} & {\quad5\quad} & {\quad6\quad} &
   {\quad7\quad} & {\quad8\quad} & {\quad9\quad} \\ \hline \hline
   \multicolumn{8}{|l|}{Subspace corrected mass smoother, $\sigma^{-1}_0 = 0.015$} \\ \hline 
   5    & 41 & 40 & 39 & 37 & 35 & 34 & 33\\  \hline
   6    & 41 & 41 & 39 & 37 & 36 & 35 & 34\\   \hline
   7    & 42 & 42 & 40 & 39 & 37 & 35 & 35\\   \hline
   8    & 42 & 42 & 41 & 39 & 37 & 37 & 35\\   \hline
 \end{tabular}
 \caption{Iteration counts for 2D parametric domain with uniform grid, $\beta = 1$}
 \label{t:Para2Db1uniform}
\end{center}
\end{table}

\subsection{Numerical experiments on physical domain}

Now, we consider a domain with a nontrivial geometry transformation as displayed in Figures~\ref{fig:domains2d} and \ref{fig:domains3d}.
The convergence of subspace corrected mass smoother degrades significantly due to the nontrivial geometry mapping. To combat this, we consider the hybrid smoother described in Section~\ref{ssub:hybrid}.
\begin{figure}[h]
  \center
\begin{minipage}{0.47\textwidth}
  \centering \includegraphics[width=0.56\textwidth]{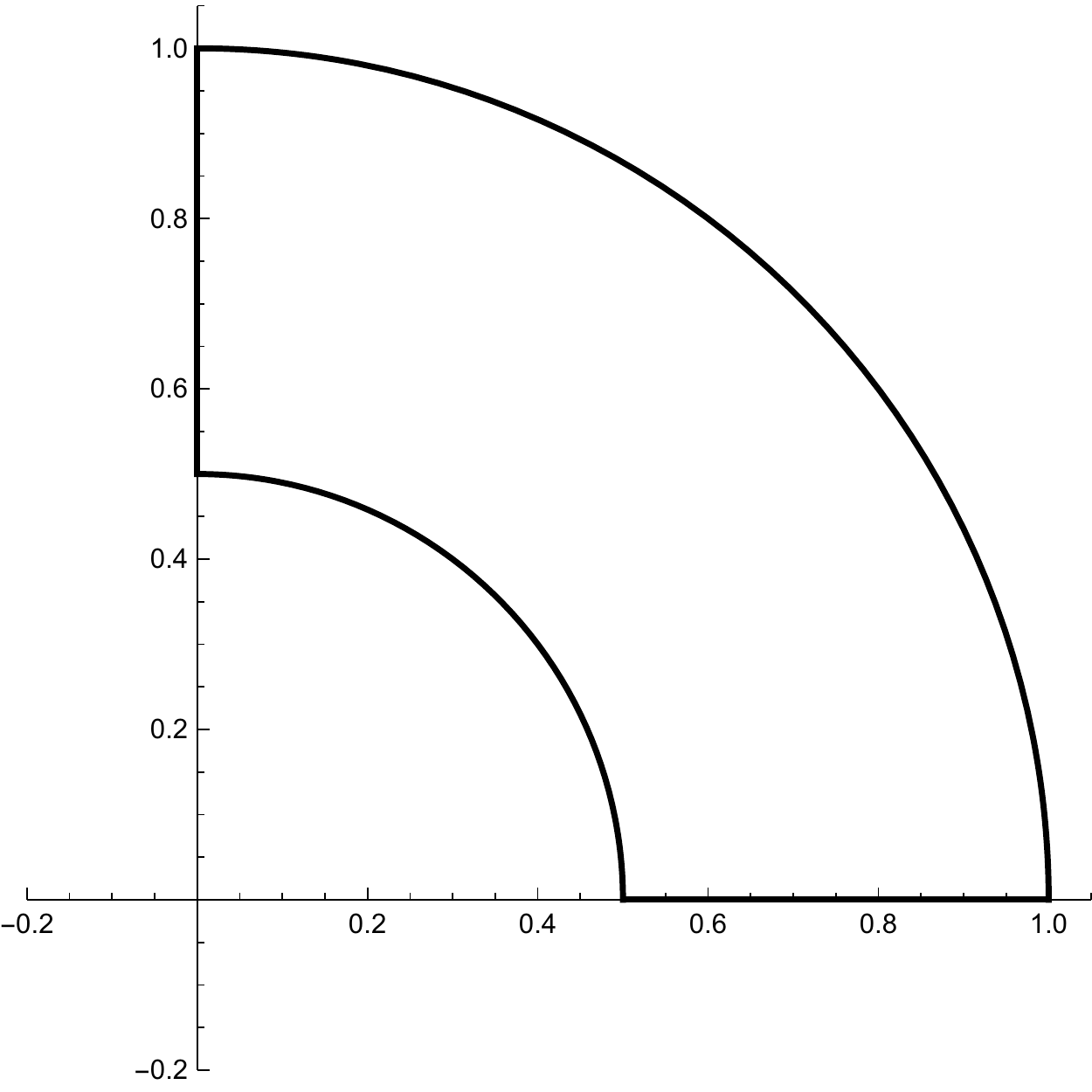}
  \caption{The two-dimensional domain}
  \label{fig:domains2d}
\end{minipage}
\begin{minipage}{0.47\textwidth}
  \centering \includegraphics[width=0.56\textwidth]{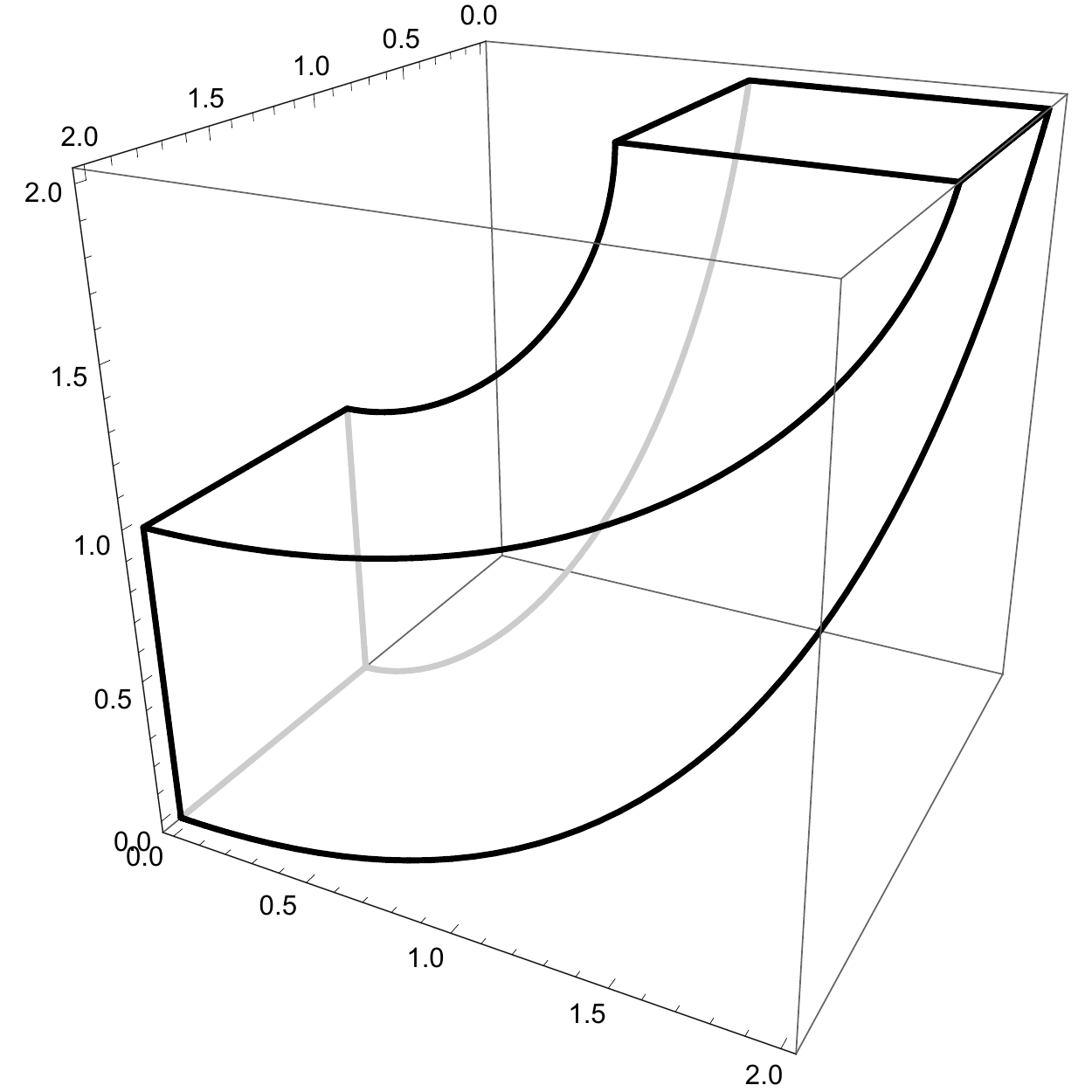}
  \caption{The three-dimensional domain}
  \label{fig:domains3d}
\end{minipage}
\end{figure}
Table~\ref{t:Ann2D} and Table~\ref{t:Ann3D} display the iteration numbers for the 2D and 3D physical domains, respectively. These iteration numbers are relatively small and seam to be robust with respect to both grid size and spline degree. Although the hybrid smoother is more expensive, as one smoothing step can be view as two smoothing steps, the reduction of iteration numbers outweigh this cost for larger spline degrees $p>4$. For smaller spline degrees, the symmetric Gauss-Seidel smoother is ideal choice. 
\begin{table}[ht]
\begin{center} 
 \begin{tabular}{| c || c | c | c | c | c | c | c |}
   \hline
   {$\ell\;\diagdown\; p$}&
   {\quad3\quad} & {\quad4\quad} & {\quad5\quad} & {\quad6\quad} &
   {\quad7\quad} & {\quad8\quad} & {\quad9\quad} \\ \hline \hline
   \multicolumn{8}{|l|}{Hybrid smoother, $\beta = 1$} \\ \hline 
   5    & 28 & 23 & 23 & 24 & 26 & 27 & 27\\  \hline
   6    & 28 & 23 & 22 & 25 & 24 & 26 & 26 \\  \hline
   7    & 29 & 23 & 22 & 23 & 24 & 24 & 24\\   \hline
   8    & 28 & 22 & 21 & 21 & 22 & 22 & 22\\   \hline\hline
   \multicolumn{8}{|l|}{Hybrid smoother, $\beta = 10^7$} \\ \hline 
   5    & 27 & 23 & 23 & 24 & 26 & 27 & 28 \\  \hline
   6    & 28 & 23 & 22 & 25 & 25 & 26 & 26 \\  \hline
   7    & 29 & 23 & 22 & 23 & 24 & 24 & 24\\   \hline
   8    & 28 & 22 & 21 & 21 & 22 & 22 & 22\\   \hline
 \end{tabular}
 \caption{Iteration counts for 2D Physical domain, $\sigma^{-1}_0 = 0.015$, $\tau = 0.1$}
 \label{t:Ann2D}
\end{center}
\end{table}
\begin{table}[ht]
\begin{center} 
 \begin{tabular}{| c || c | c | c | c | c |}
   \hline
   {$\ell\;\diagdown\; p$}&
   {\quad3\quad} & {\quad4\quad} & {\quad5\quad} & {\quad6\quad} &
   {\quad7\quad}  \\ \hline \hline
   \multicolumn{6}{|l|}{Hybrid smoother, $\beta = 1$} \\ \hline 
   1    & 16 & 18 & 21 & 27 & 30 \\  \hline
   2    & 31 & 28 & 26 & 29 & 32 \\  \hline
   3    & 46 & 37 & 33 & 33 & 35 \\  \hline
   4    & 50 & 41 & 34 & 34 & mem\\   \hline\hline
   \multicolumn{6}{|l|}{Hybrid smoother, $\beta = 10^7$} \\ \hline 
   1    & 10 & 11 & 13 & 17 & 20 \\  \hline
   2    & 12 & 16 & 20 & 25 & 29 \\  \hline
   3    & 16 & 19 & 22 & 24 & 28 \\  \hline
   4    & 29 & 28 & 28 & 28 & mem\\   \hline
 \end{tabular}
 \caption{Iteration counts for 3D Physical domain, $\sigma^{-1}_0 = 0.020$, $\tau = 0.1$}
 \label{t:Ann3D}
\end{center}
\end{table}
\begin{remark}
  All experiments have also been performed for the choice $\beta = 0$. In this case, one obtains iteration numbers that are identical than those obtained for $\beta = 1$. Therefore, we chose to only display the results for $\beta = 1$.
\end{remark}

\textbf{Acknowledgements.} This research was funded 
by the Austrian Science Fund (FWF): P31048.

\bibliographystyle{siamplain}
\bibliography{bibliography}
\end{document}